\numberwithin{equation}{section}
\declaretheorem[name=Theorem,numberwithin=section]{thm}
\theoremstyle{plain}
\newtheorem{Th}{Theorem}[section]
\newtheorem{Lemma}[Th]{Lemma}
\newtheorem{Prop}[Th]{Proposition}
 \theoremstyle{definition}
\newtheorem{Def}[Th]{Definition}
\newtheorem{?}[Th]{Problem}
\newtheorem*{thm*}{Theorem}
\begin{document}

\title{Some Ordered Ramsey Numbers of Graphs on Four Vertices}

\author[W.~Overman]{Will Overman}
\address{California Institute of Technology \\ Department of Mathematics \\
Pasadena CA} 
\email{woverman@caltech.edu}

\author[J.~Alm]{Jeremy F.~Alm}
\address{Lamar University \\ Department of Mathematics \\
Beaumont TX} 
\email{jalm@lamar.edu}

\author[K.~Coffey]{Kayla Coffey}
\address{Stephen F.~Austin University \\ Department of Mathematics \\
Nacodoches TX} 
\email{kcoffey621@gmail.com}

\author[C.~Langhoff]{Carolyn Langhoff}
\address{Lamar University \\ Department of Computer Science  \\
Beaumont TX} 
\email{clanghoff@lamar.edu }

 \keywords{}

\begin{abstract} An ordered graph $H$ on $n$ vertices is a graph whose vertices have been labeled bijectively with $\{1,...,n\}$. The ordered Ramsey number $r_<(H)$ is the minimum $n$ such that every two-coloring of the edges of the complete graph $K_n$ contains a monochromatic copy of $H$ such that the vertices in the copy appear in the same order as in $H$. 

Although some bounds on the ordered Ramsey numbers of certain infinite families of graphs are known, very little is known about the ordered Ramsey numbers of specific small graphs compared to how much we know about the usual Ramsey numbers for these graphs. In this paper we tackle the problem of proving non-trivial upper bounds on orderings of graphs on four vertices. We also extend one of our results to $n+1$ vertex graphs that consist of a complete graph on $n$ vertices with a pendant edge to vertex 1. Finally, we use a SAT solver to compute some numbers exactly. 
\end{abstract}

\maketitle

\section{Introduction} For a given graph $H$, the Ramsey number $r(H)$ is defined to be the smallest integer $n$ such that for any two-coloring of the edges of the complete graph on $n$ vertices, $K_n$, we can find a monochromatic copy of $H$. In this paper we will consider $\textit{ordered Ramsey numbers}$, which are an analogue of Ramsey numbers for ordered graphs. The systematic study of ordered Ramsey numbers began with a 2014 paper by Conlon, Fox, Lee, and Sudakov \cite{conlon}.

An ordered graph $H$ on $n$ vertices is a graph whose vertices have been labeled bijectively with $\{1,...,n\}$. We say that an ordered graph $G$ on $N$ vertices \textit{contains} an ordered graph $H$ on $n$ vertices if there is a map $\phi: [n] \to [N]$ such that $\phi(i) < \phi(j)$ for $1 \leq i < j \leq n$ and such that if $(i,j) \in E(H)$, then $(\phi(i), \phi(j)) \in E(G)$. \cite{conlon} Thus the containment is order-preserving in the sense that given a copy of $H$ in $G$ the lowest ordered vertex (by the ordering of $G$) in the copy must correspond to the vertex labeled 1 in $H$ and so on. For example, if $H$ is the cycle on four vertices with labeling $\{1,2,3,4\}$ where $E(H)=\{(1,2),(2,3),(3,4),(4,1)\}$, then a possible monochromatic copy of $H$ in some larger graph $G$ could be on vertices $\{2,5,7,9\}$ with monochromatic edges $\{(2,5),(5,7),(7,9),(9,2)\}$. 

Then we can define the $\textit{ordered Ramsey number}$ , $r_< (H)$, of an ordered graph $H$ to be smallest integer $n$ such that for any ordering and any two-coloring of $K_n$ we can find a monochromatic, order-preserving copy of $H$ contained in $K_n$. Recall that a \textit{coloring} by $m$ colors of the edges $E(G)$ of a graph $G$ is a  map $c: E(G) \to [m]$. The first observation we can make about ordered Ramsey numbers is that for any ordering of a graph $H$, we clearly have $r(H) \leq r_<(H)$ where $r(H)$ is the usual Ramsey number of an unordered $H$ and $r_<(H)$ is the ordered Ramsey number. This gives us a trivial lower bound. Also observe that the trivial upper bound for the ordered Ramsey number of an ordered graph $H$ on $n$ vertices is the usual Ramsey number of $K_n$. This follows from the following lemma, whose truth is easy to see. 
\begin{Lemma}
An ordered monochromatic complete graph on $n$ vertices necessarily contains an ordered copy of any ordered graph on $n$ vertices, regardless of its ordering.
\end{Lemma}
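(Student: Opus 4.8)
The plan is to exhibit the order-preserving embedding explicitly; there is essentially only one candidate. Let $G$ be an ordered complete graph on $n$ vertices whose edges all receive the same color, and write its vertex set as $v_1 < v_2 < \cdots < v_n$ according to the ordering of $G$. Let $H$ be an arbitrary ordered graph on $n$ vertices with vertex set $\{1,\dots,n\}$ carrying any ordering whatsoever. I would define $\phi\colon [n]\to [n]$ by $\phi(i) = v_i$, i.e. send the $i$-th vertex of $H$ to the $i$-th vertex of $G$.

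The verification has two routine parts. First, $\phi$ is order-preserving: for $1\le i<j\le n$ we have $\phi(i)=v_i < v_j = \phi(j)$ by construction, which is exactly the condition required in the definition of containment from the introduction. Second, $\phi$ carries edges to edges: if $(i,j)\in E(H)$, then since $G$ is complete we automatically have $(\phi(i),\phi(j)) = (v_i,v_j)\in E(G)$. Hence $\phi$ witnesses that $G$ contains an ordered copy of $H$, and since every edge of $G$ has the same color, that copy is monochromatic.

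I do not anticipate any genuine obstacle here; the only thing to be careful about is matching the argument precisely to the definition of ``contains'' stated earlier (order-preservation on all of $[n]$, not merely on the endpoints of edges), and noting that no assumption on the ordering of $H$ is used anywhere, which is what the phrase ``regardless of its ordering'' in the statement refers to. One could phrase the whole thing in a single sentence, but I would spell out the two bullet-point checks so that the role of completeness (edge condition) and of the monochromatic hypothesis (color condition) are both visible.
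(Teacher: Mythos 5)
Your proof is correct and is simply a careful spelling-out of the same one-line argument the paper gives (the paper just asserts the embedding exists because all edges of the monochromatic $K_n$ are present and same-colored). The explicit map $\phi(i)=v_i$ and the two verification steps are exactly the right way to make that assertion rigorous.
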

This is clear since for any ordered graph on $n$ vertices we can find vertices in $K_n$ with the ordering we want and we already know all the edges are monochromatic. Despite its simplicity, Lemma 1.1 will be useful in many of our proofs.

The paper by Conlon et al. proved a number of results for ordered Ramsey numbers of certain infinite families of graphs \cite{conlon}.  Balko et al. established results for ordered Ramsey numbers on particular orderings of certain graph families such as paths, stars, and cycles \cite{balko}. Thus we will not investigate these graphs on four vertices in this paper. So far the only paper focusing on proving ordered Ramsey results for small graphs was by Chang in \cite{chang}. In that paper, Chang proved upper bounds for Ramsey numbers of 1-orderings for graphs on 4 vertices. A 1-ordering of a graph $H$ on $n$ vertices consists of a labeling of just one vertex with some integer from $\{1,...,n\}$. Then a copy of $H$ in some ordered complete graph just needs to preserve the ordering of this given vertex. Here we will focus on complete orderings of graphs on 4 vertices. 

In this paper we will prove upper bounds on the Ramsey numbers for certain total orderings of graphs on four vertices. Specifically, in Section \ref{sec:K2} we will examine orderings of $K_2 \cup K_2$, in Section \ref{sec:K4e} we examine orderings of the diamond graph, and in Section \ref{sec:pan} we examine the 3-pan graph. In section \ref{sec:pendant} we extend our upper bound of the 3-pan graph to the infinite family of complete graphs with a pendant edge. Specifically, we will prove the following, where the definition of a ``complete with 1-pendant'' graph is given in Section \ref{sec:pendant}.

\begin{thm}
The ordered Ramsey number of the complete with 1-pendant graph on $n+1$ vertices is $R(n)+2n-1$.
\end{thm}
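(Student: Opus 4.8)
The plan is to prove matching upper and lower bounds, both resting on a simple reformulation. Throughout, write $R(n)=r(K_n)$ for the ordinary diagonal Ramsey number. Unwinding the definition, a monochromatic ordered copy of the complete-with-$1$-pendant graph $H$ in colour $c$, inside a $2$-coloured $K_N$, is exactly a colour-$c$ copy of $K_n$ on vertices $v_2<\dots<v_{n+1}$ together with a vertex $v_1<v_2$ joined to $v_2$ by a colour-$c$ edge; equivalently, a colour-$c$ copy of $K_n$ whose least vertex has a colour-$c$ neighbour below it. Call a vertex $v$ of a $2$-coloured $K_N$ \emph{$c$-low} if no edge of colour $c$ joins $v$ to a smaller vertex (so the least vertex of $K_N$ is $c$-low for both colours). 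Then: \emph{a $2$-colouring of $K_N$ has no monochromatic ordered copy of $H$ if and only if, for each colour $c$, the least vertex of every colour-$c$ copy of $K_n$ is $c$-low.} Both directions are immediate from the previous sentence.

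For the upper bound $r_<(H)\le R(n)+2n-1$ I would proceed as follows. Suppose $K_N$ is $2$-coloured with no monochromatic copy of $H$, and let $L_c$ be the set of $c$-low vertices. If $v<v'$ are both $c$-low then the edge $vv'$ is not colour $c$, so $L_c$ is a clique in the opposite colour $\bar c$; hence if $|L_c|\ge n+1$ then some $n$ of the vertices of $L_c$ other than the least vertex $1$ form a $\bar c$-copy of $K_n$ whose least vertex $m$ is not $1$ and has all edges below it coloured $\bar c$ (since $m\in L_c$), so the edge $1m$ exhibits the forbidden pendant. Thus $|L_c|\le n$ for each colour, and since only the least vertex can lie in both $L_{\mathrm{red}}$ and $L_{\mathrm{blue}}$, we get $|L_{\mathrm{red}}\cup L_{\mathrm{blue}}|\le 2n-1$. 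Finally, the set $U$ of vertices that are neither red-low nor blue-low carries no monochromatic $K_n$ (its least vertex would begin a colour-$c$ copy of $K_n$ while failing to be $c$-low), so $|U|\le R(n)-1$; hence $N=|U|+|L_{\mathrm{red}}\cup L_{\mathrm{blue}}|\le R(n)+2n-2$.

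For the lower bound $r_<(H)\ge R(n)+2n-1$ I would exhibit a $2$-colouring of $K_{R(n)+2n-2}$ with no monochromatic $H$, built on three consecutive blocks $X=\{1,\dots,n\}$, $Y=\{n+1,\dots,2n-1\}$, $Z=\{2n,\dots,R(n)+2n-2\}$, where $|Z|=R(n)-1$: colour all edges within $X$ red, all edges within $Y$ blue, every edge between $X$ and $Y$ and every edge between $X$ and $Z$ blue, every edge between $Y$ and $Z$ red, and colour $K_Z$ by a fixed $2$-colouring with no monochromatic $K_n$. Then the blue-low vertices are exactly $X$ and the red-low vertices are exactly $\{1\}\cup Y$. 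The verification is a short case analysis on how a monochromatic $K_n$ meets the blocks: in the red graph $Y$ is independent and there is no red edge between $X$ and $Y$ or between $X$ and $Z$, so a red $K_n$ is either $X$ itself (least vertex $1$, red-low) or is $\{y\}\cup(\text{a red }K_{n-1}\text{ of }K_Z)$ with least vertex $y\in Y$ (red-low); symmetrically, in the blue graph $X$ is independent and there is no blue edge between $Y$ and $Z$, so a blue $K_n$ consists of a single vertex of $X$ — its least vertex, blue-low — together with a blue clique drawn entirely from $Y$ or entirely from $Z$. In every case the least vertex of the clique is low in that colour, so the reformulation gives no monochromatic copy of $H$. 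Combined with the upper bound, $r_<(H)=R(n)+2n-1$.

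I expect the lower-bound construction to be the only real obstacle: the colours assigned to the various cross-edges between $X$, $Y$, $Z$ have to be arranged so that both halves of the ``a monochromatic $K_n$ can only begin at a low vertex'' condition hold at once, and confirming this needs the case analysis just indicated; the rest, including the whole upper bound, is short once the $c$-low reformulation is in hand. As a sanity check, for $n=2$ this construction is the two disjoint red edges $\{1,2\}$ and $\{3,4\}$ with all remaining edges blue, which has no monochromatic ordered path on three vertices, matching $R(2)+2\cdot 2-1=5$.
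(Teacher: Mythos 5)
Your proof is correct, and your upper bound takes a genuinely different route from the paper's. The paper slides a window of $R(n)$ consecutive vertices upward through the vertex set, extracting $2n-1$ monochromatic copies of $K_n$ with distinct least vertices, pigeonholes to find $n$ of the same colour, and observes that their least vertices together with vertex $1$ span a clique of size $n+1$ in the opposite colour. You instead partition the vertex set of an $H$-free colouring into the low sets $L_{\mathrm{red}}$, $L_{\mathrm{blue}}$ and the remainder $U$, bound $|L_c|\le n$ because $L_c$ is a clique in the opposite colour whose least element above $1$ would otherwise start a forbidden copy, and bound $|U|\le R(n)-1$ because $U$ carries no monochromatic $K_n$ at all. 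This is shorter, avoids the iteration and the pigeonhole, makes the origin of each summand in $R(n)+2n-2 = (R(n)-1)+n+(n-1)$ transparent, and generalizes immediately to the off-diagonal bound $r_<(K_m^1,K_n^1)\le R(m,n)+m+n-1$ that the paper actually proves (the two low sets then being bounded by $m$ and $n$ respectively). Your lower-bound construction is essentially the paper's (attributed there to Conlon), differing only in the treatment of vertex $1$: the paper keeps it as a separate block with red edges to $X$ and $Z$ and blue edges to $Y$, while you absorb it into the red clique $X$ and colour its edges to $Z$ blue; both variants survive the same case analysis on how a monochromatic $K_n$ can meet the blocks, and your $c$-low reformulation makes that verification cleaner to state.
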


Note that upper bounds on some orderings immediately give the same upper bound on ``symmetric'' orderings. By ``symmetric'' orderings, we mean that if we had a graph with vertices $a,b,c,d$ labeled $1,2,3,4$, then an upper bound on this ordering of the graph would also apply to the ordering $4,3,2,1$ by just ``flipping'' the argument. We will not explicitly mention when this symmetry applies to our results, but it is possible to apply it to a number of our results.

\section{Ordered Ramsey Numbers of $K_2 \cup K_2$}\label{sec:K2}

The proofs for upper bounds on orderings of $K_2 \cup K_2$ will be relatively straightforward, but hopefully illustrative of techniques we will use on other graphs. Also, we will be able to exhibit constructions showing that our lower bounds are tight for some orderings of $K_2 \cup K_2$, thus completely determining the ordered Ramsey number for those orderings. 

\begin{figure}[h]
\centering
\includegraphics[width=6cm]{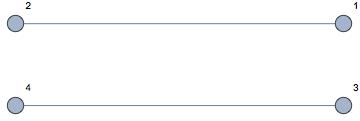}
\caption{Ordering $A$ of $K_2 \cup K_2$}
\end{figure}
We will first investigate the ordering of $K_2 \cup K_2$ given in Figure 1, which we will refer to as ordering $A$.

\pagebreak

\begin{Prop}
The ordered Ramsey number of $K_2 \cup K_2$ with ordering $A$ is 6.
\end{Prop}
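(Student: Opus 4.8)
The plan is to prove the two inequalities $r_<(A)\le 6$ and $r_<(A)\ge 6$ separately. I would first set up a convenient dictionary: writing the edge set of ordering $A$ as $\{(1,2),(3,4)\}$, a monochromatic ordered copy of $A$ inside a two-colored complete graph is precisely a pair of edges of the same color that are \emph{positionally disjoint}, i.e.\ a pair $\{(a,b),(c,d)\}$ with $a<b<c<d$ and $c(ab)=c(cd)$ (the order-preserving embedding then sends the vertices labelled $1,2,3,4$ to $a,b,c,d$). With this reformulation both directions become short finite arguments.

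For the upper bound I would exhibit a small family of edges of $K_6$ that pairwise realize the pattern. The three edges $(1,2)$, $(3,4)$, $(5,6)$ are pairwise positionally disjoint, so in any two-coloring of $K_6$ two of them receive the same color by pigeonhole, and those two edges constitute a monochromatic ordered copy of $A$. Hence $r_<(A)\le 6$. (This is a miniature instance of the idea behind Lemma~1.1.)

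For the lower bound I would give an explicit two-coloring of $K_5$ with no monochromatic ordered copy of $A$. The one computation to carry out carefully is the enumeration of positionally disjoint pairs of edges in $K_5$: there are exactly five, namely $\{(1,2),(3,4)\}$, $\{(1,2),(3,5)\}$, $\{(1,2),(4,5)\}$, $\{(1,3),(4,5)\}$, and $\{(2,3),(4,5)\}$. So it suffices to two-color the ten edges so that each of these five pairs is bichromatic. Coloring the triangle on $\{1,2,3\}$ in one color and the triangle on $\{3,4,5\}$ in the other does exactly this, since every one of the five pairs consists of one edge inside $\{1,2,3\}$ and one edge inside $\{3,4,5\}$; the remaining four edges $(1,4),(1,5),(2,4),(2,5)$, which appear in no positionally disjoint pair, may be colored arbitrarily. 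Thus $K_5$ admits a two-coloring with no monochromatic ordered copy of $A$, giving $r_<(A)\ge 6$, and together with the upper bound $r_<(A)=6$.

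The main (and essentially only) obstacle is getting the $K_5$ enumeration exactly right and confirming that the proposed coloring really is bichromatic on all five pairs; the upper bound is immediate once the three pairwise positionally disjoint edges are spotted. I expect this same two-step template — a short pigeonhole applied to a cleverly chosen set of edges of $K_n$ for the upper bound, and a small constraint-satisfaction coloring of $K_{n-1}$ for the matching lower bound — to recur for the other orderings treated later in the paper.
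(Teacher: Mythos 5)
Your proof is correct, and both halves differ in a pleasant way from the paper's. For the upper bound the paper colors $(1,2)$ red without loss of generality, deduces that every edge among $\{3,4,5,6\}$ must be blue, and then invokes Lemma~1.1 on the resulting blue $K_4$; you instead observe that $(1,2),(3,4),(5,6)$ are three pairwise positionally disjoint edges of $K_6$ and apply the pigeonhole principle directly. Your version is shorter and isolates the real reason the answer is $6$: one can pack three pairwise ``compatible'' edges into $K_6$ but, as your enumeration shows, no set of three pairwise positionally disjoint edges exists in $K_5$. For the lower bound the paper uses a different coloring of $K_5$ (all edges from vertex $1$ red, plus $(2,3)$ red, the rest blue), but the verification is the same in spirit: every one of the five positionally disjoint pairs is bichromatic. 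Your two-triangle coloring and your observation that the four ``crossing'' edges $(1,4),(1,5),(2,4),(2,5)$ lie in no positionally disjoint pair are both correct, and your reformulation of a monochromatic ordered copy of $A$ as a same-colored pair $\{(a,b),(c,d)\}$ with $a<b<c<d$ matches the ordering the paper intends. Nothing is missing.
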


\begin{proof} We will first show that $r_{<_A}(K_2 \cup K_2) \leq 6$. Recall that a \textit{coloring} by $m$ colors of the edges $E(G)$ of a graph $G$ is a surjective map $c:[m] \to E(G)$. So when we say ``color'' an edge, we mean choosing some color $\{1,...,m\}$ to assign to the edge, and here specifically we only deal with the case where $m=2$ and we have colors red and blue. Without loss of generality, color the edge between vertices 1 and 2 red. Then we know that all of the edges between vertices 3 through 6 must be blue, or else we would have an ordered copy of $K_2 \cup K_2$. But then we have a complete blue graph on four vertices, and thus by Lemma 1.1 we have a monochromatic copy of $K_2 \cup K_2$. So $r_{<_A}(K_2 \cup K_2) \leq 6$.

Now we will demonstrate that we can find a complete graph on 5 vertices with an edge coloring that does not produce a copy of $K_2 \cup K_2$. Color every edge from 1 to $\{2,3,4,5\}$ red. Then color the edge from 2 to 3 red. Color every other edge blue. See Figure 2. We claim that this graph does not have a monochromatic copy of $K_2 \cup K_2$. There is no red $K_2 \cup K_2$ since every red edge except $(2,3)$ originates from 1, and thus to find a copy of $K_2 \cup K_2$ with ordering $A$ we would need to find an edge with a higher lowest vertex than 1, which is only given by $(2,3)$, but there is no copy of $K_2 \cup K_2$ with $(2,3)$ either. So there is no red $K_2 \cup K_2$. 

We can also see that there is no blue $K_2 \cup K_2$. Since every edge from 1 is red, a monochromatic copy of $K_2 \cup K_2$ would have to be within the subgraph induced by the vertices $\{2,3,4,5\}$. The only possible way to get a blue copy of $K_2 \cup K_2$ on these vertices is if we have a blue $(2,3)$ and a blue $(4,5)$, which is not the case. So $r_{<_A}(K_2 \cup K_2) > 5$, which proves our result $r_{<_A}(K_2 \cup K_2) = 6$.
\end{proof}
\begin{figure}[h]
\centering
\includegraphics[width=6cm]{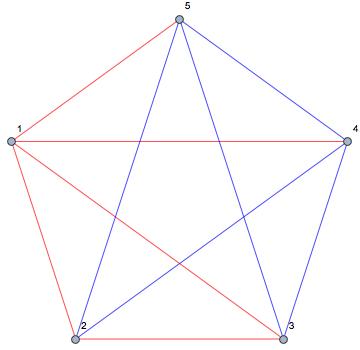}
\caption{Colored $K_5$ with no $K_2 \cup K_2$ having ordering $A$}
\end{figure}

Now we will examine the ordering on $K_2 \cup K_2$ given in Figure 3, which we will refer to as ordering $B$. 
\begin{figure}[h]
\centering
\includegraphics[width=5cm]{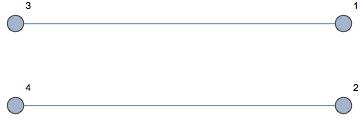}
\caption{Ordering $B$ of $K_2 \cup K_2$}
\end{figure}

\begin{Prop}
The ordered Ramsey number of $K_2 \cup K_2$ with ordering $B$ is 5.
\end{Prop}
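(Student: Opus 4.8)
The plan is to establish the two matching inequalities $r_{<_B}(K_2\cup K_2)\le 5$ and $r_{<_B}(K_2\cup K_2)\ge 5$. Recall that ordering $B$ is the ``crossing'' ordering, with edge set $\{(1,3),(2,4)\}$, so in any two-coloring of $K_N$ a monochromatic ordered copy of $B$ is exactly a choice of vertices $a<b<c<d$ for which the edges $(a,c)$ and $(b,d)$ have a common color. More generally, for two vertex-disjoint edges whose four endpoints are sorted as $w<x<y<z$, a common color on them produces a copy of ordering $B$ precisely when the edges are $\{w,y\}$ and $\{x,z\}$; I will call such a pair of edges \emph{crossing}.

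For the lower bound it suffices to two-color $K_4$ with no monochromatic copy of ordering $B$. Since $K_2\cup K_2$ has four vertices, the only way the pattern can sit inside $K_4$ is on all of $\{1,2,3,4\}$ using the edges $(1,3)$ and $(2,4)$; so I would color $(1,3)$ red, $(2,4)$ blue, and the remaining four edges arbitrarily. This gives $r_{<_B}(K_2\cup K_2)>4$.

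For the upper bound, the key observation is that the five ``diagonals'' of $K_5$, namely $D_1=(1,3)$, $D_2=(2,4)$, $D_3=(3,5)$, $D_4=(1,4)$, $D_5=(2,5)$, form a $5$-cycle under the crossing relation: one checks that $D_i$ and $D_{i+1}$ are a crossing pair for every $i$ taken modulo $5$ (and, although this is not needed, that no $D_i$ and $D_{i+2}$ cross, since such a pair always shares a vertex, so the crossing graph on these five edges is precisely $C_5$). Now take any two-coloring of $K_5$; in particular the edges $D_1,\dots,D_5$ have been colored, and since an odd cycle cannot be properly two-colored, two consecutive diagonals $D_i$ and $D_{i+1}$ receive the same color. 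Being a crossing pair, they exhibit a monochromatic ordered copy of ordering $B$, so $r_{<_B}(K_2\cup K_2)\le 5$. Combined with the lower bound, this yields $r_{<_B}(K_2\cup K_2)=5$.

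The step I expect to need the most care is the ``crossing-$5$-cycle'' claim, where one must verify for each of the five consecutive pairs that the two diagonals genuinely interleave; it is short but worth writing out explicitly. A more hands-on alternative for the upper bound would be a direct case analysis on a two-colored $K_5$ in the spirit of the proof for ordering $A$ above — for instance fixing the color of $(1,3)$ and tracking which further edges are then forced — but the pentagram argument is cleaner and I would present it.
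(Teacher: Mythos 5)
Your proof is correct. For the upper bound it is, at bottom, the same argument as the paper's: the paper fixes $(1,3)$ red without loss of generality and follows the forced chain $(2,4),(2,5)$ blue $\Rightarrow(1,4)$ red $\Rightarrow(3,5)$ blue $\Rightarrow$ blue copy on $(2,4),(3,5)$, which is exactly a walk around your pentagram $D_1,D_5,D_4,D_3,D_2$. Your packaging is cleaner and more structural: you observe that the five diagonals form a $C_5$ under the crossing relation, so an odd-cycle/proper-two-coloring argument finishes immediately, whereas the paper's chain is ad hoc (and the reader must check each forcing step is a genuine crossing pair). I verified all five consecutive crossings $D_iD_{i+1}$; they hold. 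For the lower bound the routes differ: the paper invokes the known unordered Ramsey number $r(K_2\cup K_2)=5$ together with $r(H)\le r_<(H)$, while you give a direct two-coloring of $K_4$ (the unique embedding of the pattern in $K_4$ uses edges $(1,3)$ and $(2,4)$, so coloring them differently suffices). Your version is self-contained and does not need the citation; the paper's is shorter given the reference. Both are valid, and your write-up would stand as a complete proof.
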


\begin{proof}
Note that we trivially have that $r_{<_B}(K_2 \cup K_2) \geq 5$ since the usual Ramsey number of $K_2 \cup K_2$ is 5 \cite{small}, and we noted in the introduction that $r(H) \leq r_<(H)$ for any ordering of $H$. So we only need to prove that $r_{<_B}(K_2 \cup K_2) \leq 5$.

Without loss of generality, color the edge $(1,3)$ red. This forces the edges $(2,4)$ and $(2,5)$ to both be blue. And since $(2,5)$ is blue, the edge $(1,4)$ is forced to be red, which then forces the edge $(3,5)$ to be blue. However, this gives us a blue copy of $K_2 \cup K_2$ with ordering $B$ having edges $(2,4)$ and $(3,5)$. So $r_{<_B}(K_2 \cup K_2) \leq 5$, which proves $r_{<_B}(K_2 \cup K_2) = 5$.
\end{proof}

The last ordering of $K_2 \cup K_2$ to consider is given in Figure 4, which we will refer to as ordering $C$. 
\begin{figure}[h]
\centering
\includegraphics[width=6cm]{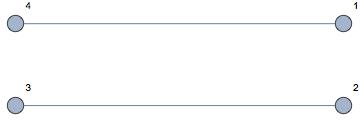}
\caption{Ordering $C$ of $K_2 \cup K_2$}
\end{figure}

\begin{Prop}
The ordered Ramsey number of $K_2 \cup K_2$ with ordering $C$ is 6.
\end{Prop}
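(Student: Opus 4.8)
The plan is to mirror the argument used for ordering $A$. First I would record what a monochromatic copy of ordering $C$ looks like: since ordering $C$ is the ``nested'' ordering of the matching, with edges $\{1,4\}$ and $\{2,3\}$, an ordered monochromatic copy of $K_2\cup K_2$ with ordering $C$ inside a two-colored $K_N$ is precisely a pair of edges $\{a,d\}$ and $\{b,c\}$ of the same color with $a<b<c<d$; that is, a monochromatic edge with another edge of the same color nested strictly inside it. Note in particular that an edge $\{a,d\}$ of $K_N$ can serve as the ``outer'' edge of such a nested pair only when $d-a\ge 3$.

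For the upper bound $r_{<_C}(K_2\cup K_2)\le 6$, take any two-coloring of $K_6$ on vertices $1,\dots,6$ and, without loss of generality, suppose the edge $\{1,6\}$ is red. If even one of the $\binom{4}{2}=6$ edges spanned by the vertices $\{2,3,4,5\}$ is red, that edge together with $\{1,6\}$ is a red copy of ordering $C$ and we are done. Otherwise every edge inside $\{2,3,4,5\}$ is blue, so $\{2,3,4,5\}$ induces a blue $K_4$, and Lemma 1.1 supplies a (monochromatic) ordered copy of $K_2\cup K_2$ with ordering $C$. Hence $r_{<_C}(K_2\cup K_2)\le 6$.

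For the lower bound $r_{<_C}(K_2\cup K_2)>5$, I would exhibit a two-coloring of $K_5$ with no monochromatic copy of ordering $C$: color the three edges of the triangle on $\{2,3,4\}$, namely $\{2,3\},\{2,4\},\{3,4\}$, blue, and color every other edge of $K_5$ red. To check there is no monochromatic nested pair: by the observation above, the only edges of $K_5$ that can be an outer edge are those with endpoint difference at least $3$, i.e.\ $\{1,4\}$, $\{1,5\}$, and $\{2,5\}$. Each of these is red, but every edge that nests inside any of them has both endpoints in $\{2,3,4\}$ and is therefore blue, so no red copy of ordering $C$ occurs; and no blue copy occurs either, since no blue edge has endpoints differing by at least $3$ and hence no blue edge can be an outer edge. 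Thus $K_5$ admits a two-coloring with no ordered monochromatic copy of $K_2\cup K_2$ with ordering $C$, and combined with the upper bound this gives $r_{<_C}(K_2\cup K_2)=6$.

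I do not expect any real obstacle here: the upper bound is structurally identical to the ordering-$A$ argument, using the longest edge $\{1,6\}$ in place of the shortest edge $\{1,2\}$. The only point requiring care is the bookkeeping in the lower bound — correctly identifying which edges of $K_5$ can play the role of the outer edge of a nested pair and verifying that the proposed coloring avoids all of them.
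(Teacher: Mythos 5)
Your proposal is correct, and the lower-bound half is in fact the very same coloring the paper uses (all edges incident to $1$ or $5$ red, the triangle on $\{2,3,4\}$ blue is exactly ``triangle $\{2,3,4\}$ blue, everything else red''); your verification via the observation that an outer edge must have endpoints differing by at least $3$ is a clean way to organize the case check. The upper-bound half, however, takes a genuinely different route from the paper. The paper runs a forcing chain: WLOG $(1,4)$ is red, which forces $(2,3)$ blue, which forces $(1,5)$ and $(1,6)$ red, which forces $(2,4)$ and $(2,5)$ blue, and then either color of $(3,4)$ completes a monochromatic nested pair. You instead transplant the structure of the paper's ordering-$A$ argument to the ``longest'' edge: WLOG $(1,6)$ is red, so either some edge inside $\{2,3,4,5\}$ is red and nests under $(1,6)$, or $\{2,3,4,5\}$ spans a blue $K_4$ and Lemma 1.1 finishes. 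Your version is shorter, makes the symmetry between orderings $A$ and $C$ (shortest edge versus longest edge as the pivot) explicit, and avoids the bookkeeping of the forcing chain; the paper's chain argument has the minor virtue of exhibiting concretely which edges become constrained, but buys nothing extra in the bound. Both arguments are complete and yield $r_{<_C}(K_2\cup K_2)=6$.
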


\begin{proof}
First we will show that $r_{<_C}(K_2 \cup K_2) \leq 6$. Without loss of generality color edge $(1,4)$ red. Then this forces $(2,3)$ to be blue. Then $(2,3)$ being blue, forces edges $(1,5)$ and $(1,6)$ to be red. Then edge $(1,6)$ being red forces edges $(2,4)$ and $(2,5)$ to be blue. But now any coloring of $(3,4)$ will gives us a monochromatic copy of $K_2 \cup K_2$. If $(3,4)$ is red, then it forms a copy with $(1,6)$, while if $(3,4)$ is blue, then it forms a copy with $(2,5)$. Thus we have that $r_{<_C}(K_2 \cup K_2) \leq 6$.

Now we will show that $r_{<_C}(K_2 \cup K_2) > 5$ by exhibiting an ordered $K_5$ with no copy of $K_2 \cup K_2$ having ordering $C$. Color every edge from 1 red and color every edge from 5 red. Then color the triangle formed by $\{2,3,4\}$ blue. This clearly doesn't have a blue copy. To see that it doesn't have a red copy, note that since every red edge involves either 1 or 5, the only way to get a copy of $K_2 \cup K_2$ with ordering $C$ is if the copy involves the edge $(1,5)$, since the lowest and highest vertices in a copy must share an edge. But then all of the edges between $\{2,3,4\}$ are blue, so there is no red copy with edge $(1,5)$.  Thus $r_{<_C}(K_2 \cup K_2) > 5$, so we get our result $r_{<_C}(K_2 \cup K_2) = 6$.
\end{proof}

\begin{figure}[h]
\centering
\includegraphics[width=6cm]{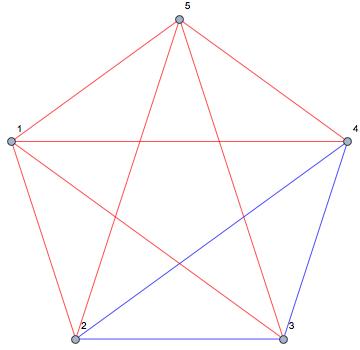}
\caption{Colored $K_5$ with no $K_2 \cup K_2$ having ordering $C$}
\end{figure}

\section{Ordered Ramsey Numbers of the Diamond Graph} \label{sec:K4e}

The diamond graph can be considered as $K_4-e$, i.e. the complete graph on four vertices with an edge removed. The usual Ramsey number for the diamond graph is 10 \cite{small}, so this is the trivial lower bound on the ordered Ramsey number of the diamond graph for any ordering. Also recall that the trivial upper bound for a graph on four vertices is $R(4)=18$. In \cite{chang}, Chang obtained upper bounds between 13 and 17 for 1-orderings of $K_4-e$. He also demonstrated that the lower bound for ordering $A$ (see Figure 6) of $K_4-e$ is at least 12, but was thought to be higher since his program was able to find 25536 constructions of $K_4-e$ with ordering $A$ on 11 vertices. In Section \ref{sec:SAT}, we will see that the correct number is 15.

\begin{figure}[h]
\centering
\includegraphics[width=6cm]{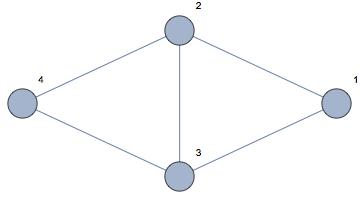}
\caption{Ordering $A$ of $K_4-e$}
\end{figure}

We will begin by proving that $r_{<_A}(K_4-e) \leq 17$. Our proof will rely on the fact that $R(K_3,K_4)=9$. \cite{small} Where we recall that $R(G,H)$ refers to the minimum integer $n$ such that for any edge-coloring of the complete graph on $n$ vertices we will get either a red $H$ or a blue $G$. So in this case, this means that for any edge coloring of $K_9$, we will either get a red $K_3$ or a blue $K_4$. Also recall the trivial fact that $R(K_4,K_3)=R(K_3,K_4)$. 

\begin{Th}
The ordered Ramsey number of $K_4-e$ with ordering $A$ is bounded above by 17.
\end{Th}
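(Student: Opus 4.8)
The plan is to combine Lemma~1.1 with the hypothesis $R(K_3,K_4)=9$. Since a monochromatic $K_4$ on four vertices contains an ordered copy of every ordered four-vertex graph, it is enough either to exhibit a monochromatic $K_4$ somewhere in the colouring, or to exhibit the five edges of the ordering $A$ of $K_4-e$ directly. So I would split the vertex set of $K_{17}$ into a lower block $L$ consisting of the eight smallest vertices and an upper block $U$ consisting of the nine largest vertices, with $|L|=8$ and $|U|=9$, and apply the Ramsey number inside $U$.

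Inside $U$, since $|U|=9=R(K_3,K_4)=R(K_4,K_3)$, the colouring contains either a blue $K_4$ — in which case Lemma~1.1 finishes the proof — or a red triangle $\{a,b,c\}$ with $a<b<c$, all of whose vertices lie above every vertex of $L$. In the latter case a red copy of $K_4-e$ with ordering $A$ needs only one more vertex, placed below $a$ and joined in red to the two vertices of the shared edge of the two triangles making up $K_4-e$ (which two of $a,b,c$ these are depends only on the fixed shape of $A$). If some $v\in L$ is joined in red to both of the required vertices we are done, so assume not: then each of the eight vertices of $L$ sends a blue edge to at least one of those two vertices, and by pigeonhole at least four vertices of $L$ are joined in blue to a common vertex $u\in\{a,b,c\}$.

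The remaining configuration — four vertices of $L$, all below $u$ and all blue to $u$ — is where the work concentrates. First I would try to rerun the same scheme one level down, now in blue, using these four vertices together with $u$ and the red/blue pattern on the remaining edges within $U$ and $L$; one also has the freedom to redistribute the split point between $L$ and $U$, and to invoke the red/blue symmetry so that the colour playing the role of ``red'' above is chosen advantageously. The hard part will be that, unlike the colour classes, the \emph{order} of the four images of the diamond's vertices is not automatically controlled, so this last stage becomes a finite but somewhat delicate case check over the near-extremal colourings that survive $R(K_3,K_4)=9$; it is precisely the looseness in that step that makes $17$ — rather than the smaller value a single clean pigeonhole-plus-$R(K_3,K_4)$ step would give — the bound one can actually push through.
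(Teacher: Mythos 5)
Your opening moves are reasonable --- split off the top nine vertices $U$, invoke $R(K_3,K_4)=9$ to get either a blue $K_4$ (done by Lemma~1.1) or a red triangle $a<b<c$ in $U$, then pigeonhole the eight vertices of $L$ over the two triangle vertices that the pendant vertex must see in red --- but the proof does not close. After the pigeonhole you are left with four vertices of $L$, all blue to a common $u\in\{a,b\}$, and at exactly that point you defer to ``a finite but somewhat delicate case check'' that you never carry out. That is where the whole difficulty lives: among those four vertices you either get a red $K_4$ (done) or some blue edge $(w_i,w_j)$ with $w_i<w_j$, which together with $u$ gives a blue triangle; but to finish in blue you then need yet another vertex \emph{below} $w_i$ that is blue to both $w_i$ and $w_j$, and nothing guarantees one exists. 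You are recursing one level down with strictly fewer vertices and no base case, so as written the bound $17$ is not established.

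The paper closes the argument by exploiting the one structural feature of ordering $A$ that your block decomposition throws away: the missing edge of $A$ joins the \emph{lowest} and the \emph{highest} vertex of the diamond. Accordingly, the paper anchors the analysis at the single top vertex $17$ (which must have exactly $8$ red and $8$ blue neighbours, since a monochromatic neighbourhood of size $9$ plus $R(K_3,K_4)=9$ yields a monochromatic $K_4$) and at the single bottom vertex $1$, applying $R(K_3,K_4)=9$ to $\{1\}\cup X$ where $X$ is the red neighbourhood of $17$. The only surviving case is a red triangle through vertex $1$ with two other vertices $p,q\in X$; then $\{1,p,q,17\}$ carries all five red edges of ordering $A$ automatically, because $1$ is globally lowest, $17$ is globally highest, and the two middle roles of $A$ are interchangeable, so no further case analysis on the internal order is needed. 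If you want to salvage your approach, the repair is to make the pendant vertex be vertex $1$ itself and the apex of the triangle be vertex $17$ itself, rather than arbitrary members of blocks of sizes $8$ and $9$.
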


\begin{proof}
Consider all of the edges from vertex 17 in the complete graph $K_{17}$. There are 16 such edges. Assume that $x\geq 9$ of them are the same color, which, without loss of generality, we can assume to be red. Then consider the set $X$ of the $x$ vertices connected to 17 by a red edge. Since $x \geq 9$, the subgraph of $K_{17}$ induced by these vertices contains either a red $K_3$ or a blue $K_4$. If there is a blue $K_4$, then we have a copy of $K_4-e$ with ordering $A$ by Lemma 1.1, so we can assume that we have a red $K_3$ instead. But then all three vertices in this red triangle also share a red edge to vertex 17, which implies that we have a red $K_4$. Thus we again would get a monochromatic copy of $K_4-e$ with ordering $A$. So there cannot be 9 or more edges of the same color form vertex 17.

Thus we can assume that there are exactly 8 red and 8 blue edges from vertex 17. Let the set of 8 vertices connected to 17 by a red edge be $X$ and let the set of vertices connected to 17 by a blue edge be $Y$. Vertex 1 is either in $X$ or $Y$; assume, without loss of generality, that $1 \in Y$. Then take the set $Z=\{1\} \cup X$. Then $|Z|=9$, so again we either have a red triangle or a blue $K_4$. We can assume again that there is no blue $K_4$, so there must be a red $K_3$ in $Z$. If the vertices of the red $K_3$ are in $X$, then we get a red copy of $K_4$ by considering the edges from vertex 17, so we're done. Thus vertex 1 must be in the red triangle. Let $p,q \in Y$ be the other two vertices in the red triangle. Then we know that vertex 1 has red edges to $p$ and $q$, and that $p$ and $q$ have a red edge between each other, and finally that $p$ and $q$ have red edges to vertex 17 since $p,q \in X$. Thus we get a red copy of $K_4-e$ with ordering $A$. Thus we have that $r_{<_A}(K_4-e) \leq 17$
\end{proof}

\begin{figure}[h]
\centering
\includegraphics[width=6cm]{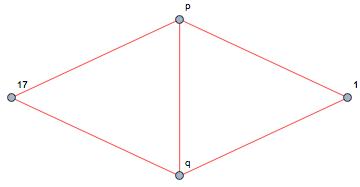}
\caption{Red copy of $K_4-e$ with $p,q \in X$}
\end{figure}

Now we will consider the ordering of the diamond graph given by Figure 8, which we will refer to as ordering $B$. 

\begin{figure}[h]
\centering
\includegraphics[width=6cm]{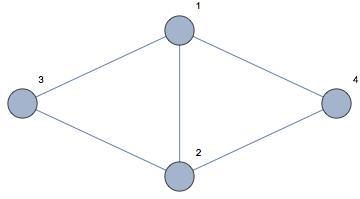}
\caption{Ordering $B$ of $K_4-e$}
\end{figure}

\begin{Th}
The ordered Ramsey number of $K_4-e$ with ordering $B$ is bounded above by 15.
\end{Th}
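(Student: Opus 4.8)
The plan is to follow the strategy of the ordering-$A$ argument above, while exploiting the saving that comes from the degree-three vertices of ordering $B$ sitting in different positions. Write the ordered diamond of ordering $B$ as $a<b<c<d$ with the edge $bc$ missing, so that the two degree-three vertices $a,d$ are the extreme vertices and $b,c$ are each joined to both $a$ and $d$. The key reformulation is that a monochromatic copy of this ordered graph is precisely a monochromatic edge $(a,d)$ with $a<d$ admitting at least two common neighbours in the same colour strictly between $a$ and $d$; in particular, if a fixed vertex $w$ plays the role of $a$, it suffices to find, among the neighbours of $w$ of some colour lying above $w$, a vertex $d$ having at least two such neighbours below it in that colour. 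The engine, exactly as for ordering $A$, is $R(K_3,K_4)=9$ together with Lemma 1.1: among any nine vertices joined in one colour to a common vertex $w$ there is either a monochromatic triangle in that colour — which with $w$ forms a monochromatic $K_4$, and hence a copy of $K_4-e$ in \emph{every} ordering — or a $K_4$ in the other colour, which again contains $K_4-e$.

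First I would dispose of the easy case: if in $K_{15}$ some vertex is incident to nine or more edges of one colour, the engine finishes. So we may assume every vertex of $K_{15}$ has exactly six, seven, or eight neighbours in each colour. The sum of the red degrees is even while an all-seven degree sequence on fifteen vertices is odd, so some vertex $w$ has exactly eight neighbours of one colour, say red; call this set $S$. Split $S$ into its part $S^-$ below $w$ and its part $S^+$ above $w$; the larger part has at least four elements, and since ordering $B$ is unchanged under reversing the order we may assume it is $S^+$. Using $w$ as the bottom vertex $a$, any vertex of $S^+$ with at least two red neighbours below it inside $S^+$ completes a red copy of ordering $B$. If there is no such vertex, then the red graph on $S^+$, oriented from smaller to larger vertex, has every in-degree at most one, hence at most $|S^+|-1$ edges, so the blue graph on $S^+$ is complete but for at most $|S^+|-1$ edges. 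When $|S^+|\ge 6$ this blue graph exceeds the largest possible number of edges of a $K_4-e$-free graph on $|S^+|$ vertices, so it contains a copy of $K_4-e$; and since only few edges are missing, a short case check produces one whose absent edge falls between the second and third of its four vertices — a blue copy of ordering $B$. (Here one checks that the few near-complete $K_4-e$-free configurations on five and six vertices cannot actually occur, as each would already contain the ``two red neighbours below'' configuration used above.)

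The main obstacle is the case in which the red neighbourhood of $w$ splits almost evenly around $w$, with $|S^+|$ equal to four or five: then $S^+$ is too small for the density argument, and one must use the global structure rather than $S^+$ alone. Two natural routes are to adjoin suitable vertices of $K_{15}$ lying outside $S$ — now placing $w$ as one of the degree-two vertices $b$ or $c$ of the copy, whose partner need not be adjacent to $w$ — or to replace the density step by a double count over all of $K_{15}$ of monochromatic paths of length two $x<y<z$, showing that simultaneously forbidding a red edge with two red interior common neighbours and a blue edge with two blue interior common neighbours allows strictly fewer such paths than fifteen vertices must contain. I expect this almost-balanced case, together with the bookkeeping of the small extremal configurations in the density step, to be the delicate part of the proof. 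It is precisely because a single vertex can serve here as the degree-three extreme vertex of the copy — so that no auxiliary vertex must be adjoined merely to reach a monochromatic neighbourhood of size nine, as was needed for ordering $A$ — that one obtains the bound $15$ rather than $17$.
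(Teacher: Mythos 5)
Your reformulation of ordering $B$ does not match the paper's. Although Figure~8 is an image, the paper's own proof pins the ordering down: every copy it exhibits (e.g.\ the red copy on $\{1,3,x,y\}$ with all of $(1,3),(1,x),(1,y),(3,x),(3,y)$ red and $(x,y)$ unconstrained) is $K_4$ minus the edge between the two \emph{highest} vertices, and the SAT table's diagonal entry for that ``2--3'' ordering is the value $12$ quoted for ordering $B$. You instead take the missing edge to join the two \emph{middle} vertices ($a<b<c<d$ with $bc$ absent). That is a genuinely different ordered graph: it is fixed by order-reversal while the paper's $B$ is not, and its exact ordered Ramsey number is $13$ rather than $12$. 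So even a completed version of your argument would bound a different ordering than the one in the theorem. The paper's actual proof also has a different shape: it colors $(1,2)$ red, partitions the remaining vertices into four classes $RR,RB,BR,BB$ according to the colors of their edges to $1$ and to $2$, and shows $|RR|\le 1$, $|RB|\le 3$, $|BR|\le 3$, $|BB|\le 5$ by short pigeonhole arguments inside each class, so that any coloring avoiding ordering $B$ lives on at most $14$ vertices.

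Independently of the misidentification, your argument is not complete on its own terms. You explicitly leave the case $|S^+|\in\{4,5\}$ unresolved, offering only ``two natural routes'' without carrying either out; and the density step for $|S^+|\ge 6$ rests on an unproved extremal bound for diamond-free graphs together with a deferred ``short case check'' that the blue $K_4-e$ so found can be chosen with its missing edge in the required ordered position. These are real gaps rather than bookkeeping: the almost-balanced case is exactly where an argument anchored on the neighborhood of a single vertex loses control, which is presumably why the paper anchors its count on the two endpoints of a fixed edge instead.
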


\begin{proof}
Consider the complete graph $K_n$ where $n$ is yet to be determined. Without loss of generality, assume that the edge $(1,2)$ is colored red. Now we will define four sets that partition the remaining vertices $\{3,4,...,n\}$. Let $RR$ be the set of vertices that have a red edge from 1 and a red edge from 2. Let $BB$ be the set of vertices that have a blue edge from 1 and a blue edge from 2. Let $RB$ be the set of vertices that have a red edge from 1 and a blue edge from 2. Finally, let $BR$ be the set of vertices that have a blue edge from 1 and a red edge from 2. Note that these four sets form a partition of all vertices $\{3,4,...,n\}$. Now assume that we do not have a monochromatic copy of $K_4-e$ with ordering $B$.

Clearly we have that $|RR| \leq 1$ since otherwise we would get a copy of $K_4-e$ with ordering $B$ since all the vertices in this set are necessarily greater than 1 or 2, so if 1 and 2 both had red edges to more than other vertex we would get a copy.

Now consider $RB$. We claim that $|RB| \leq 3$. To see this, assume that $|RB|=4$ and note that the vertices in $RB$ must have some total ordering. Without loss of generality, order them $3,4,5,6$. Now we know that out of the edges $(3,4),(3,5),(3,6)$ at least two of them must be the same color. Let these two edges of the same color be $(3,x)$ and $(3,y)$ with $x<y$. Then regardless of which color these two edges are, we get a monochromatic copy of $K_4-e$ with ordering $B$ since if they are red, then the vertices $\{1,3,x,y\}$ form a red copy, while if they are blue, then the vertices $\{2,3,x,y\}$ form a blue copy (see Figures 9 and 10). Thus we have that $|RB| \leq 3$. And a completely analogous argument shows that $|BR| \leq 3$.

So finally we need to consider $BB$. We claim that $|BB| \leq 5$. To see this, assume $|BB|=6$. The vertices in $BB$ are totally ordered, so without loss of generality, number them $3,4,5,6,7,8$. Note that of the edges $(3,4),(3,5),(3,6),(3,7),(3,8)$ only one of them can be blue since if we had two blue edges $(3,x)$ and $(3,y)$, then we'd get a copy of $K_4-e$ with ordering $B$ on vertices $\{1,3,x,y\}$.  

First we assume that one vertex from $\{4,5,6,7,8\}$ does have a blue edge from 3. Let it be vertex $x$. Then consider the subgraph on the vertices $Q=\{4,5,6,7,8\} \setminus \{x\}$. Let $y$ be the lowest ordered vertex in $Q$. Then we know that $y$ has an edge to each of the other three vertices in $Q$ and thus at least two of these edges are the same color. If there are two blue edges, call them $(y,y')$ and $(y,y'')$, then we get a copy of $K_4-e$ with ordering $B$ on vertices $\{1,y,y',y''\}$ since $y < y'$ and $y < y''$. If there are two red edges, call them $(y,y')$ and $(y,y'')$, then we get a copy on the vertices $\{3,y,y',y''\}$ where we recall that 3 has red edges to every vertex except $x$, which is not in $Q$. So if there is a blue edge then we see that we get a copy of $K_4-e$ with ordering $B$ if $|BB| \leq 5$.

Now consider the case in which 3 has red edges to all of $\{4,5,6,7,8\}$. Then we can clearly see that we can use the same argument as the case in which we do have a blue edge since we still have at least four vertices to which 3 has a red edge. In fact we can just "forget" vertex 8. Then we can see that vertex 4 either has two red or two blue edges to $\{5,6,7\}$, so we will get a monochromatic copy of $K_4-e$ with ordering $B$ either using vertex 1 if it's a blue copy or vertex 3 if it's a red copy. So we have that $|BB| \leq 5$.

Thus we have that in order to avoid a monochromatic copy of $K_4-e$ with ordering $B$ we need $n$ to be less than or equal to $2+|RR|+|RB|+|BR|+|BB| \leq 2+1+3+3+5=14$. Any vertex we add to the graph will have to go in one of $RR$, $BB$, $RB$, or $BR$, which would thus give us a monochromatic $K_4-e$ with ordering $B$. So $r_{<_B}(K_4-e) \leq 15$.
\end{proof}
We will see in Section \ref{sec:SAT} that 12 is the correct Ramsey number. 

\begin{figure}[h]
\centering
\includegraphics[width=6cm]{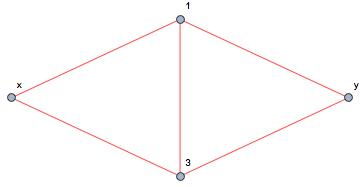}
\caption{Red copy of $K_4-e$ with ordering $B$}
\end{figure}

\begin{figure}[h]
\centering
\includegraphics[width=6cm]{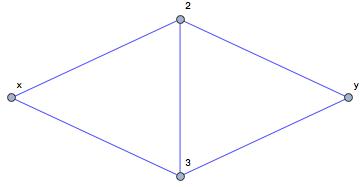}
\caption{Blue copy of $K_4-e$ with ordering $B$}
\end{figure}

The last ordering of the diamond graph we will consider is the one given in Figure 11, which we will refer to as ordering $C$.

\begin{figure}[h]
\centering
\includegraphics[width=6cm]{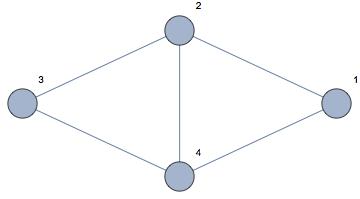}
\caption{Ordering $C$ of $K_4-e$}
\end{figure}

In order to establish an upper bound on this ordering, we will first need two lemmas concerning the ordered graph on three vertices with edges $E=\{(1,2),(2,3)\}$\, i.e. the ordered path on 3 vertices. Denote this graph $P_3^<$. 

The following two lemmas follow from Lemma 18 in \cite{balko}, but are given short proofs here for completeness. 
\begin{Lemma}
Any edge coloring of the ordered complete graph on 5 vertices either contains a red copy of $P_3^<$ or a blue copy of $K_3$, i.e. $r_<(P_3^<, K_3) \leq 5$.
\end{Lemma}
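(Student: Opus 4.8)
The plan is to prove that any two-coloring of the edges of $K_5$ on the vertex set $\{1,2,3,4,5\}$ either contains a red $P_3^<$ (i.e. a red path on vertices $a<b<c$ with edges $(a,b)$ and $(b,c)$) or a blue triangle. First I would suppose, for contradiction, that there is a coloring with neither. The key observation is that a red $P_3^<$ is exactly a vertex $b$ that has a red edge to some smaller vertex and a red edge to some larger vertex. So the ``no red $P_3^<$'' hypothesis says: for every vertex $b$, either all red edges at $b$ go to vertices below $b$, or all red edges at $b$ go to vertices above $b$. In other words, each vertex is a ``red-sink'' (red neighbors all smaller) or a ``red-source'' (red neighbors all larger), possibly trivially if it has no red edges at all.

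Next I would exploit the endpoints. Vertex $1$ has no smaller vertex, so all its red edges go up — it is forced to be a red-source. Symmetrically vertex $5$ is forced to be a red-sink. Now consider the middle vertices $2,3,4$. Since there is no blue triangle, the subgraph induced on $\{2,3,4\}$ cannot be all blue, so at least one edge among $(2,3),(2,4),(3,4)$ is red. A short case analysis on which of $2,3,4$ are red-sources and which are red-sinks, combined with the forced orientations at $1$ and $5$, will produce the contradiction. For instance, if the red edge inside $\{2,3,4\}$ is $(i,j)$ with $i<j$, then $i$ has a red edge going up (to $j$), so $i$ must be a red-source, and $j$ has a red edge going down, so $j$ must be a red-sink; one then pushes this to reach a vertex that is simultaneously forced to be a source and a sink while having red edges in both directions, or else one finds that certain edges must be blue and assembles a blue triangle.

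Concretely, the cleanest route is: color $(2,3),(2,4),(3,4)$; at least one is red. If $(3,4)$ is red, then $3$ has a red edge up so $3$ is a red-source, hence $(2,3)$ is blue; and $4$ has a red edge down so $4$ is a red-sink, hence $(4,5)$ is blue; also $2$ — if $(2,4)$ were red then with $(3,4)$ red and looking at $4$ we'd need to check, but more simply, consider the edges from $2$ and from $5$: since $(2,3)$ is blue we want to avoid a blue triangle through $2$ and $3$, etc. Rather than chase every branch here, the point is that three interacting ``source/sink'' constraints on five vertices with one forced red edge in the middle leave no room, and each surviving case either completes a red $P_3^<$ at one of $2,3,4$ or forces enough blue edges (typically $(2,3)$, $(2,5)$ or $(3,5)$, and $(4,5)$-type edges) to build a blue $K_3$ on a triple such as $\{2,3,5\}$ or $\{3,4,5\}$.

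The main obstacle I anticipate is purely organizational: making the case split on the orientation ``type'' of each of $2,3,4$ clean and non-redundant, so that the argument does not balloon into a dozen near-identical subcases. I expect the source/sink reformulation to tame this, since it replaces ``for all pairs'' reasoning with a single bit per vertex plus the two forced bits at the endpoints, leaving essentially one real case (where is the red edge among $\{2,3,4\}$) with a one-line finish each. I would also double-check the boundary degenerate situations — e.g. a vertex with no red edges at all is vacuously both a source and a sink — to be sure they are covered rather than swept aside.
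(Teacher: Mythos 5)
Your approach is correct and would yield a complete proof, but it is organized differently from the paper's. You make explicit the structural dichotomy that the absence of a red $P_3^<$ forces every vertex to be a ``red-source'' or a ``red-sink,'' and you split on which edge of the middle triple $\{2,3,4\}$ is red; the paper instead runs a single linear forcing chain: a red edge must exist in $\{1,2,3\}$, its upper endpoint $x_2$ then forbids red edges from $x_2$ into $\{4,5\}$, so $(4,5)$ is red, so $(1,4),(2,4),(3,4)$ are all blue, so $(1,2)$ and $(2,3)$ are forced red by the triangles $\{1,2,4\}$ and $\{2,3,4\}$, giving a red $P_3^<$. The two arguments use the same elementary observations (your sink condition at a vertex with a red edge upward is exactly the paper's step forcing $(1,4),(2,4),(3,4)$ blue), but your version buys a cleaner bookkeeping device, while the paper's buys brevity. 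The only thing missing from your write-up is the actual execution of the three cases, which you leave as a gesture; for the record they do close in one line each. If $(3,4)$ is red, then $(1,3),(2,3),(4,5)$ are blue, and vertex $2$ is either a source (so $(1,2)$ is blue and $\{1,2,3\}$ is a blue triangle) or a sink (so $(2,4),(2,5)$ are blue and $\{2,4,5\}$ is a blue triangle). If $(2,3)$ is red, then $(1,2),(3,4),(3,5)$ are blue, forcing $(4,5)$ red to avoid a blue $\{3,4,5\}$, which makes $4$ a source, so $(1,4),(2,4)$ are blue and $\{1,2,4\}$ is a blue triangle. If $(2,4)$ is red, then $(1,2),(4,5)$ are blue, and vertex $3$ as a source gives the blue triangle $\{1,2,3\}$ while as a sink it gives the blue triangle $\{3,4,5\}$. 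So your plan is sound; just be sure to write these cases out rather than asserting they ``leave no room.''
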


\begin{proof}
Consider $K_5$ and consider the subgraph on vertices $\{1,2,3\}$. Then we know there must be some red edge, call it $e_1=(x_1,x_2)$. Now consider the subgraph on $\{x_2,4,5\}$, then there must also be some red edge on this subgraph. But we know that this edge cannot involve vertex $x_2$, or else we would get a red copy of $P_3^<$. Thus we must have that $(4,5)$ is red. Then this implies that $(1,4),(2,4),(3,4)$ must all be blue. But then this implies that $(1,2)$ must be red or else we'd get a blue $K_3$ on $\{1,2,4\}$ and also implies that $(2,3)$ must be red or we'd get a blue $K_3$ on $\{2,3,4\}$. But then we have that $(1,2)$ and $(2,3)$ are both red, so we get a red copy of $P_3^<$. Thus $r_<(P_3^<, K_3) \leq 5$.
\end{proof}

\begin{Lemma}
Any edge-coloring of the ordered complete graph on 7 vertices either contains a red $P_3^<$ or a blue $K_4$, i.e. $r(P_3^<,K_4) \leq 7$. 
\end{Lemma}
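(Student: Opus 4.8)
The plan is to describe the structure of any two-coloring of the ordered complete graph on $7$ vertices that contains no red $P_3^<$, and to show that such a coloring is forced to contain a blue $K_4$.

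First I would reformulate the obstruction. A red copy of $P_3^<$ is a triple $a<b<c$ with both $(a,b)$ and $(b,c)$ red; equivalently, it is a single vertex $b$ that is incident with a red edge going to a smaller vertex and a red edge going to a larger vertex. Hence, if the coloring has no red $P_3^<$, then every vertex sends its red edges in only one direction with respect to the ordering. Using this, partition the vertex set into $L$ and $H$, where $L$ consists of the vertices having no red edge to a smaller vertex (so vertices with no red edges at all lie in $L$), and $H=V\setminus L$. I then claim that each of $L$ and $H$ induces a blue clique. For $L$: if $x<y$ both lie in $L$ and $(x,y)$ is red, then $y$ has a red edge to the smaller vertex $x$, contradicting $y\in L$. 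For $H$: if $x<y$ both lie in $H$ and $(x,y)$ is red, choose $w<x$ with $(w,x)$ red (possible since $x\in H$); then $w<x<y$ with $(w,x)$ and $(x,y)$ red is a red $P_3^<$, a contradiction.

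Finally, since $|L|+|H|=7$, one of these two sets has at least $\lceil 7/2\rceil = 4$ vertices, and the blue clique it induces is the desired blue $K_4$. (The same argument gives $r_<(P_3^<,K_k)\le 2k-1$ in general, and for $k=3$ it recovers the previous lemma.) I would not expect a genuine obstacle here: the entire proof rests on spotting the ``red edges of each vertex point only one way'' dichotomy and the resulting partition into two blue cliques, after which the pigeonhole step is immediate, so the written proof should be only a few lines.
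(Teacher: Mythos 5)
Your proof is correct, and it takes a genuinely different route from the paper's. The paper argues by a case analysis anchored at the median vertex $4$: it forces $4$ to send blue edges to all of $\{1,2,3\}$ or all of $\{5,6,7\}$, then examines which edges of the triangle on $\{1,2,3\}$ are red or blue, propagates blue edges from a chosen vertex $x_2$ to $\{5,6,7\}$, and finally derives a contradiction from the edges between $\{5,6,7\}$ and the blue triangle $\{x_2,x_3,4\}$. Your argument instead extracts the structural content of the hypothesis: a red $P_3^<$ is exactly a vertex with a red edge to something smaller and a red edge to something larger, so in a coloring with no red $P_3^<$ every vertex's red edges point in only one direction, and the two classes $L$ (no red edge downward) and $H$ (some red edge downward, hence none upward) each induce a blue clique; pigeonhole on $|L|+|H|=7$ finishes it. Both claims you make about $L$ and $H$ check out ($L$ by definition, $H$ by the no-$P_3^<$ hypothesis). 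What your approach buys is brevity, a uniform bound $r_<(P_3^<,K_k)\le 2k-1$ that subsumes both Lemma 3.3 and Lemma 3.4 of the paper in one stroke (and is tight, as the two-blue-cliques-plus-red-bipartite construction on $2k-2$ vertices shows), and it avoids the somewhat delicate step in the paper where a red edge and a blue edge of the triangle on $\{1,2,3\}$ must be arranged to share the vertex $x_2$ in a specific way. This is essentially the mechanism behind Lemma 18 of Balko et al., which the paper cites as the source of these two lemmas.
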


\begin{proof}
Consider the complete ordered graph of $K_7.$ Consider vertex 4. Vertex 4 must have a blue edge connected to each vertex in the set $\{1,2,3\}$ or $\{5,6,7\}$ or else we would get a red copy of $P_3^{<}.$ Without loss of generality, assume vertex 4 has blue edges to the set $\{1,2,3\}.$ Now, among the edges $(1,2),(1,3)$ and $(2,3)$ at least one edge has to be red in order to avoid a blue copy of $K_4.$ There must also be a blue edge among the edges $(1,2),(1,3)$ and $(2,3)$ in order to avoid a red copy of $P_3^{<}.$ Assume the set $\{x_1, x_2, x_3\}$ represents the set of vertices $\{1,2,3\}$ in some order. Let the edge $(x_1,x_2)$ with $x_1 < x_2$ be red. Let the edge $(x_2,x_3)$ be blue. Now the vertex $x_2$ must have blue edges to each vertex in the set $\{5,6,7\}$ in order to avoid a red copy of $P_3^{<}.$ Now, among the edges $(5,6),(5,7)$ and $(6,7),$ one must be red in order to avoid a blue copy of $K_4.$ Assume the set $\{y_1,y_2,y_3\}$ represents the set $\{5,6,7\}$ in some order. Let the edge $(y_1,y_2)$ be red with $y_1 < y_2.$ Now, each vertex in the set $\{5,6,7\}$ must have at least one red edge to the blue triangle $(x_2,x_3,4)$ in order to avoid a blue copy of $K_4.$ Thus, creating a red copy of $P_3^{<}$ from a vertex in the triangle $(x_2,x_3,4)$ and the edge $(y_1,y_2).$ Therefore, $r(P_3^{<}, K_4) \leq 7.$ 
\end{proof}


\begin{Th}
The ordered Ramsey number of $K_4-e$ with ordering $C$ is bounded above by 14.
\end{Th}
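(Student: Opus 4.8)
The plan is to fix one vertex of the prospective monochromatic copy and reduce everything to the two $P_3^<$ lemmas. Ordering $C$ has a hub vertex (one of degree $3$ in $K_4-e$) sitting at an extreme position of the order, so I would pin the correspondingly extreme vertex of $K_n$ — vertex $1$ or vertex $n$, whichever is forced into the role of that hub — and examine its $n-1$ incident edges. For $n=14$, pigeonhole produces a set $S$ of at least $\lceil 13/2\rceil=7$ vertices joined to the pinned vertex by edges of a single colour, say red.

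Next I would apply the lemma $r(P_3^<,K_4)\leq 7$ to the ordered complete graph induced on $S$. If $S$ contains a blue $K_4$, then Lemma 1.1 hands us a blue ordered copy of $K_4-e$ in any ordering we please, in particular ordering $C$, and we are done. Otherwise $S$ contains a red copy of $P_3^<$, i.e.\ three vertices $x_1<x_2<x_3$ of $S$ with $(x_1,x_2)$ and $(x_2,x_3)$ red. I would then check that $\{x_1,x_2,x_3\}$ together with the pinned extreme vertex forms, in red, exactly a copy of $K_4-e$ in ordering $C$: the pinned hub contributes its three red edges to $x_1,x_2,x_3$ (red because each $x_i\in S$), the path supplies the two red edges $(x_1,x_2)$ and $(x_2,x_3)$, and the single remaining pair is precisely the non-edge of ordering $C$. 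Since $\lceil (n-1)/2\rceil\geq 7$ exactly when $n\geq 14$, this gives $r_{<_C}(K_4-e)\leq 14$.

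The fussy point is this last verification: one must confirm that with the pinned vertex in its extreme slot and $x_1<x_2<x_3$ filling the other three slots in order, the five guaranteed red edges coincide with the five edges of ordering $C$ and the unforced pair is the missing one — this is exactly where the precise shape of ordering $C$ enters, and it is what dictates whether to pin vertex $1$ or vertex $n$. I would keep the weaker lemma $r_<(P_3^<,K_3)\leq 5$ in reserve for a possible residual case: for instance, if one prefers a two-level argument — first extracting a monochromatic neighbourhood, then inside it trading a $K_4$ for a $K_3$ via the size-$5$ lemma and re-inflating the $K_3$ to a $K_4$ using the pinned vertex — the small lemma is what makes that work, though the main line above already delivers the stated bound. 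I would not attempt a matching lower bound, since, as already happened for orderings $A$ and $B$, I expect $14$ to be non-optimal and the exact value to emerge from the SAT computation in Section~\ref{sec:SAT}.
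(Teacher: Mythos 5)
Your proposal is correct and follows essentially the same route as the paper: pin the extreme vertex playing the hub role (the paper uses vertex $14$), pigeonhole its $13$ incident edges to get a monochromatic neighbourhood of size at least $7$, and apply Lemma 3.4 ($r(P_3^<,K_4)\leq 7$) there, so that a blue $K_4$ finishes via Lemma 1.1 and a red $P_3^<$ together with the hub assembles the red $K_4-e$ in ordering $C$ (whose missing edge is between the first and third vertices). The reserve use of $r_<(P_3^<,K_3)\leq 5$ is unnecessary, exactly as you suspected.
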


\begin{proof}
Consider $K_{14}$. Consider the edges from vertex 14. Let $B$ be the set of vertices to which 14 has blue edges, and let $R$ be the set of vertices to which 14 has red edges. Clearly either $R$ or $B$ has size greater than or equal to 7. Assume, without loss of generality, that $|R| \geq 7$. Then since $|R|\geq 7$ we know by Lemma 3.4 that $R$ either has a red $P_3^<$ or a blue $K_4$. If we have a blue $K_4$, then we're done. But then if we have a red $P_3^<$, we get a red copy of $K_4-e$ with ordering $C$ since 14 is connected to all three vertices in the copy of $P_3^<$ by a red edge. 
\end{proof}

We will see in Section \ref{sec:SAT} that 14 is indeed the correct Ramsey number.

\section{Ordered Ramsey Numbers of the 3-Pan Graph}\label{sec:pan}

All of the orderings of the 3-pan graph that we will consider will have a pendant edge from the triangle on vertices $\{2,3,4\}$ to vertex 1. We will consider the orderings we get from attaching vertex 1 to all three possible vertices of the triangle. Note that the usual Ramsey number of the 3-pan is 7, so that is our trivial lower bound. First we will investigate the ordering in which vertex 1 is attached to vertex 4, which we will refer to as ordering $A$.

\begin{figure}[h]
\centering
\includegraphics[width=6cm]{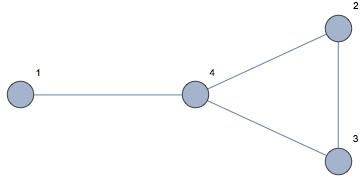}
\caption{Ordering $A$ of the 3-pan}
\end{figure}

\begin{Th}
The ordered Ramsey number of the 3-pan with ordering $A$ is bounded above by 10.
\end{Th}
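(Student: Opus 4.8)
The plan is to work inside $K_{10}$ and anchor everything at the largest vertex, vertex $10$, exploiting the fact that any triangle through vertex $10$ automatically has vertex $10$ as its top vertex — which is exactly where the triangle of ordering $A$ must sit, with the pendant vertex below it.

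First I would look at the $9$ edges joining vertex $10$ to $\{1,\dots,9\}$ and apply pigeonhole: at least $\lceil 9/2\rceil = 5$ of them receive the same color, say red. Let $X\subseteq\{1,\dots,9\}$ be the red-neighborhood of $10$, so $|X|\geq 5$, write $m=\min X$, and consider the set $X\setminus\{m\}$, which has at least $4$ elements.

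Next I would split into two cases according to the coloring inside $X\setminus\{m\}$. If every edge inside $X\setminus\{m\}$ is blue, then picking any four of its vertices yields a blue $K_4$, and by Lemma 1.1 that complete graph already contains an ordered copy of the 3-pan with ordering $A$. Otherwise there is a red edge $(b,c)$ inside $X\setminus\{m\}$ with $b<c$; then $m<b<c<10$, the edges $(m,10),(b,10),(c,10)$ are all red because $m,b,c\in X$, and $(b,c)$ is red by choice, so $\{m,b,c,10\}$ carries a red copy of the 3-pan with ordering $A$ — the triangle on $\{b,c,10\}$ and the pendant edge $(m,10)$. Either way a monochromatic ordered copy appears, so $r_{<_A}(\text{3-pan})\leq 10$.

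I do not expect a real obstacle here: the argument is essentially one pigeonhole step plus the trivial Lemma 1.1, and the numerology ($5=\lceil 9/2\rceil$, and removing one vertex still leaves $4$) is exactly tuned to force either a blue $K_4$ or the desired red configuration. The one point I would double-check carefully is that the vertex order genuinely matches ordering $A$: the pendant vertex must be the \emph{smallest} of the four and must be joined to the \emph{largest}, which is precisely why $m$ is taken to be the minimum of $X$ and why the whole construction is anchored at vertex $10$.
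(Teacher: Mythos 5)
Your proof is correct and follows essentially the same strategy as the paper's: pigeonhole on the edges at vertex $10$, then either find a red edge inside the majority-color neighborhood (giving a red triangle through $10$ with a lower red neighbor of $10$ as the pendant) or obtain a blue $K_4$ and invoke Lemma 1.1. The only cosmetic difference is that you strip off just the minimum vertex $m$ of the red neighborhood, whereas the paper keeps only the four highest vertices; both choices guarantee the pendant vertex sits below the triangle, as ordering $A$ requires.
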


\begin{proof}
Consider $K_{10}$. Vertex 10 must have at least 5 edges that are red or 5 edges that are blue to the other 9 vertices. Assume, without loss of generality, that there are 5 red edges. Let $R$ be the set of $\geq 5$ vertices with red edges to vertex 10. Since all of the vertices of $K_{10}$ are totally ordered, the vertices in $R$ are totally ordered. Remove the $|R|-4$ lowest vertices from $R$ so that we are left with the four vertices in $R$ with the highest ordering. Now if there is any red edge $e=(x_1,x_2)$ with $x_1 < x_2$ amongst these four vertices, then we get a red triangle on $\{x_1,x_2,10\}$, and since 10 has a red edge to at least one other vertex with ordering less than both $x_1$ and $x_2$, we get a red copy of the 3-pan with ordering $A$. But if we don't have a red edge amongst the four highest vertices in $R$, then we get a blue $K_4$, so by Lemma 1.1 we get a blue copy of the 3-pan with ordering $A$. Thus $r_{<_A}(\text{3-pan}) \leq 10$.
\end{proof}

Next we will consider the ordering we get by attaching vertex 1 to vertex 3. We will refer to this as ordering $B$ of the 3-pan.
\begin{figure}[h]
\centering
\includegraphics[width=6cm]{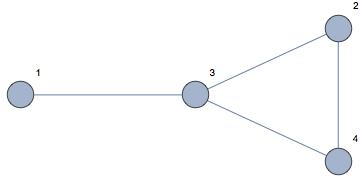}
\caption{Ordering $B$ of the 3-pan}
\end{figure}

\begin{Th}
The ordered Ramsey number of the 3-pan with ordering $B$ is bounded above by 14.
\end{Th}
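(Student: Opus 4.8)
The plan is to mimic the structure of the proof for ordering $A$ of the 3-pan (Theorem on $r_{<_A}(\text{3-pan})\le 10$), but with the pendant vertex now attached to the \emph{middle} vertex of the ordered triangle rather than the top one. In ordering $B$ the pendant vertex $1$ sits below the triangle $\{2,3,4\}$ and is joined to vertex $3$, the second-largest of the triangle. So a monochromatic copy consists of a monochromatic triangle on three vertices $a<b<c$ together with a monochromatic edge of the same color from some vertex $v<a$ to $b$ (the middle vertex of the triangle). Equivalently, I want a monochromatic $P_3^<$ sitting on top of an earlier monochromatic edge in a compatible way — more precisely, a monochromatic triangle whose \emph{middle} vertex also has a monochromatic edge going down to some strictly smaller vertex.

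First I would pass to the vertex $1$ of $K_{14}$ and split the remaining $13$ vertices by the color of their edge to vertex $1$: let $R$ (resp.\ $B$) be the vertices joined to $1$ by a red (resp.\ blue) edge. One of these has size $\ge 7$; without loss of generality $|R|\ge 7$. Now I restrict attention to the ordered complete graph induced on $R$, which has at least $7$ vertices, all of them ordered above vertex $1$. The key point: if inside $R$ there is a red triangle $a<b<c$, then since $b\in R$ the edge $(1,b)$ is red, and $1<b<c$, so $\{1,a,b,c\}$... wait, I need the pendant attached to the \emph{middle} vertex of the triangle, which is $b$, and $1<a$, so $\{1,a,b,c\}$ with red edges $ab,bc,ac$ and $1b$ gives exactly a red 3-pan in ordering $B$. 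So it suffices that the ordered graph on $R$ either contains a red triangle or a blue 3-pan in ordering $B$. To force this I would invoke the known ordered Ramsey fact that the complete ordered graph on $7$ vertices yields a red $P_3^<$ or a blue $K_4$ (Lemma 3.5 in the excerpt, $r(P_3^<,K_4)\le 7$): apply it with the colors swapped, so on $\ge 7$ vertices we get a blue $P_3^<$ or a red $K_4$. A red $K_4$ contains a red triangle (handled above, and in fact a red $K_4$ on $\{a,b,c,d\}$ with the triangle $\{b,c,d\}$ and pendant edge $(a$-to-middle$)$ is already a red 3-pan internally, or just use the triangle plus edge to $1$). A blue $P_3^<$ on vertices $x<y<z$ with blue edges $xy,yz$: here $y$ plays the role of the middle of a would-be blue triangle with pendant $x$; but I still need the third triangle edge $xz$ blue and a fourth vertex below $x$. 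This is where the argument needs more than a single Lemma invocation.

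The cleaner route, which I expect to be the main engine, is to iterate: on $|R|\ge 7$ vertices, if there is no red triangle at all then the red graph is triangle-free, hence by $R(K_3,K_3)=6$ (or directly) the blue graph on these $7$ vertices is quite dense. In fact a triangle-free graph on $7$ vertices has an independent set of size $3$ in... no — I should instead argue that among $R$ I can find a blue $K_4$: a red-triangle-free graph on $7$ vertices need not have a blue $K_4$ (Ramsey number $R(3,4)=9$), so $7$ is not enough for that crude bound, and this is exactly why the target is $14=2\cdot 7$ rather than something smaller. So the right statement to prove as an intermediate lemma is: \emph{the ordered complete graph on $7$ vertices contains a blue $K_3$, or a red $P_3^<$, or a red $K_3$} — but that is trivially true since any $K_7$ has a monochromatic triangle. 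The real content must be getting the pendant \emph{below} the triangle, which vertex $1$ supplies for red copies inside $R$ but not for blue ones. Therefore I would instead set up the count more carefully, partitioning by the pair of colors to vertex $1$ exactly as in Theorem 3.3's proof for ordering $B$ of the diamond: with edge structure around vertex $1$ fixed, bound the sizes of the color classes and show $13 \le$ (sum of bounds) forces a copy. The hard part will be pinning down the correct per-class bounds — showing, for instance, that the set of vertices seeing a blue edge from $1$ can contain no blue $P_3^<$ above a blue down-edge, which via Lemma 3.4 ($r_<(P_3^<,K_3)\le 5$) and Lemma 3.5 ($r(P_3^<,K_4)\le 7$) should cap each relevant class at around $4$ or $6$, summing to at most $13$. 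I would carry out the casework on the color incident to vertex $1$, apply Lemma 3.4 and Lemma 3.5 to each resulting monochromatic-restricted subgraph to cap its size, add the caps, and check the total is $\le 13$, so that the $14$th vertex forces a monochromatic 3-pan in ordering $B$; the delicate point, and the one I would spend the most care on, is ensuring the pendant edge lands on the \emph{middle} triangle vertex with a strictly smaller endpoint in every case.
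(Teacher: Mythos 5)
Your proposal correctly diagnoses the combinatorial content of ordering $B$ (the pendant must attach to the \emph{middle} vertex of an ordered monochromatic triangle from strictly below), and you correctly identify why the naive route fails: splitting the other $13$ vertices by their edge color to vertex $1$ only guarantees a class of size $7$, and since $R(3,4)=9$ a red-triangle-free graph on $7$ vertices need not contain a blue $K_4$. But after that diagnosis you never actually close the argument. Your final paragraph is a plan (``partition by the pair of colors to vertex $1$, cap each class, check the caps sum to $\le 13$'') for which you supply no per-class bounds and no verification that such bounds even hold; indeed the natural bounds one gets from triangle-freeness alone ($|R|,|B|\le 8$ via $R(3,4)=9$) sum to $16$, which is too weak, and it is not clear that the finer caps you would need (around $6$ per class) are true without substantial extra casework. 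So as written this is a gap, not a proof.

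The paper's actual argument uses a different and much shorter decomposition, driven by $R(3,3)=6$ applied twice rather than by the neighborhood of vertex $1$. Take a monochromatic (say red) triangle $x<y<z$ inside the top six vertices $\{9,\dots,14\}$; avoiding a red copy forces the middle vertex $y$ to send \emph{blue} edges to all of $\{1,\dots,8\}$. Then take a monochromatic triangle inside $\{3,\dots,8\}$: if it is blue, it joins with $y$ to form a blue $K_4$ (done by Lemma 1.1); if it is red with middle vertex $b$, then $b$ must send blue edges to $1$ and $2$, and $\{1,2,b,y\}$ is a blue $3$-pan in ordering $B$ (blue triangle $\{2,b,y\}$ with blue pendant $(1,b)$ to its middle vertex). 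The key idea you are missing is this cascading use of the \emph{forced blue down-edges from the middle vertices of red triangles} found in two disjoint vertex blocks, which is what brings the bound down to $6+6+2=14$.
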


\begin{proof}
Consider $K_{14}$. Now just consider the subgraph on vertices $\{9,10,11,12,13,14\}$. We know that any complete graph on 6 vertices contains a monochromatic triangle \cite{small}, so assume there is a red triangle $\{x,y,z\}$ with $x < y < z$ in this subgraph. Then in order to avoid a red copy of the 3-pan with ordering $B$, we must have that vertex $y$ has a blue edge to all of the vertices $\{1,2,3,4,5,6,7,8\}$.

Now consider the subgraph on the vertices $\{3,4,5,6,7,8\}$. Again we know that this subgraph must have a monochromatic triangle. If it were a blue monochromatic triangle, then we would get a blue copy of $K_4$ with the triangle and vertex $y$. So we must have a red triangle $\{a,b,c,\}$ with $a < b < c$ on these vertices. Now in order to avoud a red copy of the 3-pan with ordering $B$, we know that vertex $b$ has blue edges to vertices 1 and 2. But now we can take the subgraph on vertices $\{1,2,b,y\}$ and see that we get a blue copy of the 3-pan with ordering $B$.  Thus $r_{<_B}(\text{3-pan}) \leq 14$.
\end{proof}

\begin{figure}[h]
\centering
\includegraphics[width=6cm]{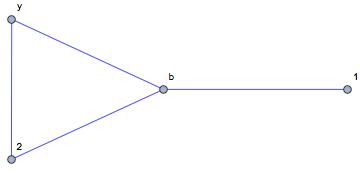}
\caption{Blue copy of the 3-pan with ordering $B$}
\end{figure}

Now we will consider the ordering of the 3-pan in which vertex 2 is attached to vertex 1, which we will refer to as ordering $C$ of the 3-pan.

\begin{figure}[h]
\centering
\includegraphics[width=6cm]{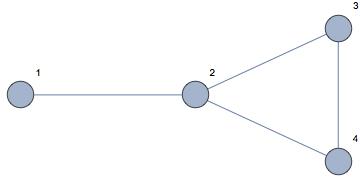}
\caption{Ordering $C$ of the 3-pan}
\end{figure}

\begin{Th}
The ordered Ramsey number of the 3-pan with ordering $C$ is bounded above by 11.
\end{Th}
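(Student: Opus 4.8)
The plan is to work inside $K_{11}$, assume there is no monochromatic copy of the $3$-pan with ordering $C$, and derive a contradiction. Since a monochromatic $K_4$ already contains the $3$-pan (Lemma 1.1), we may assume throughout that neither colour class contains a $K_4$. The observation that drives everything is that in ordering $C$ the pendant is attached to the \emph{smallest} vertex of the triangle: if $T$ is a monochromatic triangle of colour $\chi$ whose least vertex is $v\ge 2$, then every edge from $v$ to a vertex below it must have colour $\bar\chi$, for otherwise $T$ together with that edge is a monochromatic $3$-pan with ordering $C$. Call such a $v$ a \emph{$\chi$-critical} vertex; thus a $\chi$-critical vertex is $\bar\chi$-joined to every lower vertex, and in particular no vertex $\ge 2$ is simultaneously red-critical and blue-critical.

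Next I would establish two lemmas. \emph{(i) Clique-below lemma.} If $v<v'$ are both $\chi$-critical, then $\{1,\dots,v-1\}$ spans a clique of colour $\chi$: indeed $v$ and $v'$ are each $\bar\chi$-joined to all of $\{1,\dots,v-1\}$ and to each other, so any $\bar\chi$-edge inside $\{1,\dots,v-1\}$ would complete a $\bar\chi$-coloured $K_4$; as there is no monochromatic $K_4$ this forces $v-1\le 3$, i.e.\ $v\le 4$. In particular at most one vertex exceeding $4$ is $\chi$-critical, for each colour $\chi$. \emph{(ii) Low vertices are critical.} Each of the vertices $2,3,4$ has at least $7$ neighbours above it; if such a vertex $v$ were neither red- nor blue-critical, then its red-up-neighbourhood contains no red edge and its blue-up-neighbourhood no blue edge, so (absence of a monochromatic $K_4$) each has at most $3$ elements, giving at most $6<7$ upper neighbours — a contradiction. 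Hence $2,3,4$ are each red- or blue-critical.

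For the endgame, two of $\{2,3,4\}$ share a critical colour, say (by colour symmetry) red. Lemma (i) together with the fact that a red-critical vertex is blue downwards then pins down the colours of the edges among $\{1,2,3,4\}$ and rules out ``all three red-critical'' (vertex $2$ red-critical forces $(1,2)$ blue, while Lemma (i) applied to $3<4$ would force $(1,2)$ red); so exactly two of $\{2,3,4\}$ are red-critical and one is blue-critical, and in each of the three resulting subcases the downward edges are determined. In the subcases that are not already contradictory, one uses that the blue-critical vertex among $\{2,3,4\}$ — say vertex $2$ — is not red-critical, so its red-up-neighbourhood has at most $3$ vertices and it therefore has at least $6$ neighbours above it in blue; that set is blue-triangle-free (else a blue $K_4$ with vertex $2$), so its red graph has independence number at most $2$ on at least $6$ vertices. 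By Ramsey/Tur\'an this red graph contains red triangles lying inside $\{5,\dots,11\}$, hence red-critical vertices greater than $4$; producing two of them with distinct minima contradicts the ``at most one red-critical vertex $>4$'' clause of Lemma (i), and the remaining small configurations contradict the already-determined colours of the edges leaving $2,3,4$.

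The hard part is precisely this endgame: the case split over which of $2,3,4$ is the odd one out, and squeezing enough red triangles (with genuinely different minima) out of the dense set of upward blue-neighbours. The awkward corner is when that set has exactly $6$ vertices and its red graph is a ``wheel'' $W_5$, which carries only one triangle-minimum; there one needs either the extra vertex ($\ge 7$ such neighbours) or a separate ad hoc argument. I expect the bulk of the write-up — and essentially all of its real content — to consist of making that finite check airtight.
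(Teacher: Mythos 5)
Your skeleton is sound --- the notion of a $\chi$-critical vertex (least vertex of a monochromatic $\chi$-triangle, hence $\bar\chi$-joined to everything below it), the clique-below lemma, and the pigeonhole argument showing each of $2,3,4$ is critical are all correct. But the endgame as you describe it does not close, and you say as much. The specific mechanism --- mining red triangles out of the blue-up-neighbourhood of the blue-critical vertex among $\{2,3,4\}$ --- has two defects: that neighbourhood may contain $3$ and $4$ themselves, so the red triangles you find need not have minimum exceeding $4$ and hence need not contradict anything; and, as you note, a $6$-vertex graph with independence number $2$ can be a wheel whose triangles all share a single minimum, so you cannot in general extract two distinct red-critical minima from it. There is, however, a clean finish inside your own framework. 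In each of your three subcases, pairing a hypothetical red-critical vertex $v'>4$ with the \emph{larger} of the two low red-critical vertices forces (via clique-below) a red clique on the initial segment, which clashes with the blue downward edges of the \emph{smaller} low red-critical vertex; so no vertex above $4$ is red-critical, and clique-below already gives at most one blue-critical vertex above $4$. Now apply $R(3,3)=6$ twice to the seven vertices $\{5,\dots,11\}$: find a monochromatic triangle, delete its least vertex, find another. This yields two monochromatic triangles with distinct minima, both exceeding $4$; a red one is an immediate contradiction, and two blue ones contradict clique-below. That closes every subcase, including your wheel corner.

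For comparison, the paper's proof is an order of magnitude shorter and needs none of this apparatus, not even the reduction to the $K_4$-free case. It slides a six-vertex window upward starting from $\{2,\dots,7\}$: extract a monochromatic triangle, delete its least vertex from the window, add the next vertex ($8$, then $9,10,11$), and repeat. This produces five monochromatic triangles with pairwise distinct least vertices, all at least $2$. Three of them share a colour, say red, with minima $x<y<z$; since each minimum must be blue to every lower vertex to avoid a red $3$-pan with ordering $C$, the edges $(1,x),(x,y),(x,z),(y,z)$ are all blue and $\{1,x,y,z\}$ is a blue $3$-pan with ordering $C$. Your two-triangles-above-$4$ fix is really this sliding-window idea in miniature; if you want to salvage your write-up, that is the move to import.
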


\begin{proof}
Consider $K_{11}$. Initialize a list $Q=\{2,3,4,5,6,7\}$. Then we know that the subgraph induced by the vertices in $Q$ must contain a monochromatic triangle. Remove the lowest vertex of this triangle from $Q$ and add vertex 8 to $Q$. Then the subgraph of vertices now in $Q$ must also contain a monochromatic triangle. Again remove its lowest vertex and now add in 9. Continue this process twice more, adding in vertices 10 and 11. Then we will get five monochromatic triangles all with a different lowest vertex. And we know that in order to avoid a copy of the 3-pan with ordering $C$ none of these triangles can have an edge of the same color as the triangle to any of the vertices in $K_{11}$ lower than the triangle's lowest vertex.

Since there are five monochromatic triangles, we know that three of them must be the same color. Assume, without loss of generality, that we have three red triangles. Then let their lowest vertices be $x, y, z$ with $x < y < z$. Then we know that $z$ has a blue edge to $y$ and $x$, we know that $y$ has a blue edge to $x$ and we know that $x$ has a blue edge to vertex 1. Thus we get a blue copy of the 3-pan with ordering $C$.  Thus $r_{<_C}(\text{3-pan}) \leq 11$.
\end{proof}

\begin{figure}[h]
\centering
\includegraphics[width=6cm]{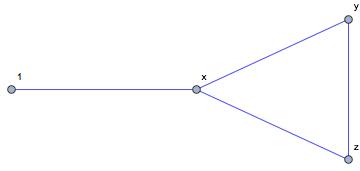}
\caption{Blue copy of the 3-pan with ordering $C$}
\end{figure}

\section{Ordered Ramsey Number of $K_n$ with a Pendant Edge}\label{sec:pendant}

Finally in this section we will be able to extend our proof of the upper bound on ordering $C$ of the 3-pan to the infinite family of graphs with a copy of $K_{n-1}$ on the vertices $\{2,3,...,n\}$ and with an edge between vertices 1 and 2. We make the following definition

\begin{Def}
The $\textit{complete with 1-pendant}$ ordering of a graph on $n$ vertices consists of a complete subgraph on vertices $\{2,3,...,n\}$ and an edge between vertex 1 and vertex 2. 
\end{Def}

For example, the complete with 1-pendant graph on 6 vertices is shown below in Figure 17

\begin{figure}[h]
\centering
\includegraphics[width=6cm]{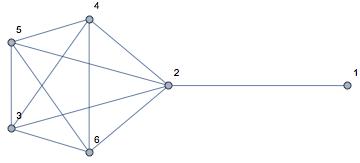}
\caption{Complete with 1-pendant graph on 6 vertices}
\end{figure}

\begin{Prop} \label{main} Let $K_n^1$ be the complete with 1-pendant graph on $n+1$ vertices. Then we have that $r_< (K_m^1,K_n^1) \leq R(m,n) + m + n -1$ where $R(m,n)$ is the standard Ramsey number for the complete graph on $m$ vertices vs. the complete graph on $n$ vertices. 
\end{Prop}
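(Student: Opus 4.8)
The plan is to run the sliding-window argument of ordering $C$ of the $3$-pan (Section~\ref{sec:pan}) in general, with $m+n-1$ in place of $5$ and $R(m,n)$ in place of $R(3,3)=6$. Suppose, for contradiction, that some two-coloring of $K_N$ with $N=R(m,n)+m+n-1$ has no order-preserving red $K_m^1$ and no order-preserving blue $K_n^1$. The first step is to isolate the avoidance property that drives everything: if the coloring has no red $K_m^1$, then for \emph{every} red clique $\{a_1<a_2<\dots<a_m\}$ occurring in the coloring, the lowest vertex $a_1$ can have no red edge to any vertex below it (such a down-neighbor together with the red $K_m$ would be a red $K_m^1$). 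So the lowest vertex of any red $K_m$ has all of its downward edges blue, and symmetrically the lowest vertex of any blue $K_n$ has all of its downward edges red.

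Next I would generate many monochromatic cliques with distinct lowest vertices. Keep a working set $Q$ of $R(m,n)$ vertices, starting with $Q=\{2,3,\dots,R(m,n)+1\}$; by definition of $R(m,n)$ the subgraph on $Q$ contains a red $K_m$ or a blue $K_n$, pick one such clique $C_1$ with lowest vertex $\ell_1$, delete $\ell_1$ from $Q$, and insert the next unused vertex. Iterating $m+n-1$ times produces cliques $C_1,\dots,C_{m+n-1}$ with lowest vertices $\ell_1,\dots,\ell_{m+n-1}$; these $\ell_i$ are pairwise distinct, since each $\ell_i$ leaves $Q$ the instant it is used and only larger-labeled vertices enter afterward, and each $\ell_i\ge 2$. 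The vertex bookkeeping then checks: we begin with $R(m,n)$ vertices and insert $m+n-2$ fresh ones (one after each of the first $m+n-2$ cliques), so the largest label used is $R(m,n)+1+(m+n-2)=N$, and $K_N$ suffices.

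Now apply pigeonhole: each $C_i$ is a red $K_m$ or a blue $K_n$, so among the $m+n-1$ of them either at least $n$ are red $K_m$'s or at least $m$ are blue $K_n$'s (otherwise there would be at most $(n-1)+(m-1)=m+n-2$ of them). In the first case let $v_1<v_2<\dots<v_n$ be the increasing reordering of $n$ of the lowest vertices of red $K_m$'s. By the avoidance property each $v_i$ has all downward edges blue, so for $i<j$ the edge $(v_i,v_j)$ is blue; hence $\{v_1,\dots,v_n\}$ is a blue $K_n$. Since $v_1\ge 2$, vertex $1$ lies below $v_1$ and $(1,v_1)$ is blue, so $\{1,v_1,\dots,v_n\}$ is an order-preserving blue $K_n^1$, a contradiction. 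The other case is symmetric (swapping the roles of the two colors and of $m$ and $n$) and yields a red $K_m^1$. This proves $r_<(K_m^1,K_n^1)\le R(m,n)+m+n-1$.

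\textbf{Main obstacle.} There is no deep difficulty here; the argument is a direct lift of the $3$-pan proof. The only points requiring care are the bookkeeping: verifying that the sliding-window procedure really produces $m+n-1$ cliques with \emph{distinct} lowest vertices, that the total number of vertices consumed is exactly $R(m,n)+m+n-1$, that the pigeonhole threshold $m+n-1$ is the correct one, and that every generated clique has lowest vertex $\ge 2$ so the pendant vertex $1$ is always available. One should also be careful that the avoidance property is invoked only for cliques that actually occur in the fixed coloring, which is exactly how it is used above.
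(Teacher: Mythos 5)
Your proposal is correct and follows essentially the same route as the paper: the same sliding-window generation of $m+n-1$ monochromatic cliques with distinct lowest vertices, the same pigeonhole split into $n$ red $K_m$'s or $m$ blue $K_n$'s, and the same use of the forced-blue (resp.\ forced-red) downward edges from those lowest vertices together with vertex $1$ to build the opposite-colored $K_n^1$ (resp.\ $K_m^1$). Your write-up is in fact a bit more careful than the paper's about the distinctness of the lowest vertices and the vertex count.
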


\begin{proof}
Start with $1+R(m,n)$ vertices. Then we have a set $V=\{2,3,...,1+R(m,n)\}$ which contains either a $K_m$ or $K_n.$ Remove the lowest ordered vertex, and replace it with $2+R(m,n).$ If we continue this process until we have added $(m+n-2)$ vertices, then we have $(m+n-1)$ copies of either $K_m$ or $K_n.$ Specifically, we have $n$ red copies of $K_m$ or $m$ blue copies of $K_n.$ Suppose we have $n$ red copies of $K_m.$ Then all of the copies have a unique lowest vertex. Now, we know that each copy's lowest vertex must have a blue edge to all other lower ordered vertices or else we would get a red copy of $K_m^1$ on $m+1$ vertices. But then if we arrange these $n$ lowest vertices along with vertex 1 in decreasing order, $v_n,v_{n-1},...,v_2,v_1,1,$ we know that each vertex has to have blue edges to all vertices after it, so we get a blue $K_{n+1}$, which inherently contains a $K_n^1.$ Thus, by Lemma 1.1, we get a blue copy of the complete with 1-pendant graph on $n+1$ vertices. Now instead suppose we have $n$ blue copies of $K_n.$ This proof follows the same idea for when we have $n$ copies of $K_m.$ Therefore, $r_{<}(K_m^1,K_n^1)\leq R(m,n) + m+n-1.$
\end{proof}

This bound is actually tight, which can be demonstrated by a construction that we found with David Conlon. 

\begin{Prop}\label{prop:lb}
There exists an edge coloring of the complete graph on $R(m,n)+m+n-2$ vertices that does not contain a monochromatic copy of $K_m^1$ in the first color nor of $K_n^1$ in the second color. 
\end{Prop}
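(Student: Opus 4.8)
The plan is to \emph{exhibit} such a coloring and then verify it by a short structural argument, rather than to argue abstractly. The useful reformulation is that a red copy of $K_m^1$ is exactly a red $K_m$ together with one extra red edge joining the \emph{minimum} vertex of that $K_m$ to some smaller vertex (and symmetrically for blue). So it suffices to build a $2$-coloring of $K_N$, $N=R(m,n)+m+n-2$, in which \textbf{every red $K_m$ has its smallest vertex joined by blue edges to all vertices below it}, and every blue $K_n$ has its smallest vertex joined by red edges to all vertices below it; no monochromatic $K_m^1$ or $K_n^1$ can then be completed.

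A natural first attempt is layered: take a critical coloring $\chi_0$ of $K_{R(m,n)-1}$ (no red $K_m$, no blue $K_n$) on a block $M$, prepend a block $L$ of $n-1$ vertices and append a block $U$ of $m$ vertices, and color $L$ internally blue, $U$ internally red, $L$–$M$ red, and $M$–$U$, $L$–$U$ blue. Then every red $K_m$ is either $U$ itself or a single vertex of $L$ together with a red $K_{m-1}$ of $M$; in both cases its minimum lies in $L\cup U$ and sees only blue edges below it, and the ``dual'' blue copies through $L$ are handled symmetrically. The one problematic family is a blue $K_{n-1}$ inside $M$ extended by a vertex of $U$: its minimum is a \emph{core} vertex, which forces $\chi_0$ to have the extra property of containing no blue $K_{n-1}^1$ — and already for $m=n=3$ this is the requirement that Lemma~\ref{?}'s bound $r_<(P_3^<,K_3)\le 5$ be tight, so such a $\chi_0$ exists only on $R(m,n)-2$ vertices, one short. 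Hence the actual construction (which we found jointly with David Conlon) must be more economical than the naive layering above — for instance by interleaving the low and high vertices with $M$, or by a recursion on $m+n$, so that the monochromatic cliques created when the adjoined vertices extend the critical ones are forced to have their minima at the very bottom of the order, where nothing (or only the opposite color) lies below them.

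Once the coloring is written down, the verification is a finite case analysis: classify, according to the colors of the adjoined edges, every red $K_m$ (resp.\ every blue $K_n$), locate its minimum vertex, and confirm it sees only the opposite color below it; the copies lying entirely inside the critical block are excluded directly by the choice of $\chi_0$. The main obstacle, and the genuinely creative step, is finding the coloring so that this holds \emph{for both colors at once} while keeping the vertex count at exactly $R(m,n)+m+n-2$ — the tension being that the core necessarily contains red $K_{m-1}$'s and blue $K_{n-1}$'s, so naive ``clique-extending'' vertices tend to create monochromatic $K_m$'s and $K_n$'s with interior minima, precisely what must be avoided; matching the upper bound of Proposition~\ref{main} shows the count cannot be improved. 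As a consistency check, for $m=n=3$ the construction should produce a $K_{10}$ with no monochromatic $3$-pan in ordering $C$, in agreement with the upper bound $r_{<_C}(\text{3-pan})\le 11$ and the exact value obtained in Section~\ref{sec:SAT}.
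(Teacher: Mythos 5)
There is a genuine gap: you never actually exhibit the coloring. Your text correctly isolates the right strategy (reduce to controlling the minimum vertex of every monochromatic $K_m$ and $K_n$, and build the coloring from a critical coloring of $K_{R(m,n)-1}$ plus auxiliary monochromatic blocks), writes down one candidate layering, observes that it fails, and then only speculates that the true construction ``must be more economical,'' suggesting interleaving or recursion. That is a plan plus an acknowledged dead end, not a proof. Moreover, the failure you identify is exactly the one that the actual construction avoids by a placement choice you did not find: the problem with your layering $L<M<U$ is that a vertex of $U$ sitting \emph{above} the critical block $M$ can extend a blue $K_{n-1}$ of $M$ to a blue $K_n$ whose minimum lies \emph{inside} $M$, where you have no control over the downward edges. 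The fix is simply to put nothing above the critical block.

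Concretely, the paper's coloring orders the vertices as $1 < X < Y < Z$, where $Z$ is the critical block on $R(m,n)-1$ vertices (no red $K_m$, no blue $K_n$), $Y$ is a blue $K_{n-1}$, $X$ is a red $K_{m-1}$, and $1$ is a single bottom vertex; the cross-edges are $Z$--$X$ blue, $Z$--$Y$ red, $X$--$Y$ blue, $X$--$1$ red, $Y$--$1$ blue, $Z$--$1$ red. Because the critical block is at the top, every red $K_m$ must pick up its minimum from $\{1\}\cup Y$ (a red $K_{m-1}$ of $Z$ extended downward by a $Y$-vertex or by $1$, or $\{1\}\cup X$ itself), and every such minimum has only blue edges to the vertices below it; symmetrically every blue $K_n$ has its minimum in $\{1\}\cup X$ with only red edges going down. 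The vertex count is $1+(m-1)+(n-1)+(R(m,n)-1)=R(m,n)+m+n-2$ as required. So while your reformulation of ``no monochromatic $K_m^1$'' and your general architecture match the paper's, the essential content of the proposition --- a coloring that verifiably works --- is missing from your write-up, and the version you do write down is one you yourself show to be incorrect.
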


\begin{proof}
Arbitrarily order the $R(m,n)+m+n-2$ vertices in the complete graph $G=K_{R(m,n)+m+n-2}.$ Take the highest ordered $R(m,n)-1$ vertices of $G.$ Then we know there is some way to color the edges among these vertices so that we do not get a red copy of a $K_m$ or a blue copy of a $K_n$. Color the edges this way. Call this subgraph of the highest ordered $R(m,n)-1$ vertices $Z.$ Now take the $n-1$ vertices before $Z$ in the ordering of $G$. Color all the edges among these vertices blue so that we get a blue $K_{n-1}.$ Call this subgraph $Y$. Now take the next $m-1$ vertices before $Y$ in the ordering of $G$. Color all the edges among these vertices red so that we get a red $K_{m-1}$. Call this subgraph $X$. Then the only vertex of $G$ that is not in $Z,Y,$ or $X$ is vertex $1$. Now color all the edges between $Z$ and $X$ blue. Color all the edges between $Y$ and $X$ blue. Color all the edges between $X$ to vertex $1$ red. Color the edges between $Z$ and $Y$ red. Color the edges between $Z$ to vertex $1$ red. Finally, color the edges between $Y$ to vertex $1$ blue. See Figure 1 below. Recall that $1< X < Y < Z$ where by $<$ we mean that all the vertices in one set have order less than all the vertices in the other set. We can see that this construction guarantees that whenever we get a red copy of $K_m$, there are no red edges from this $K_m$ to a lower vertex in $G.$ Similarly, whenever we get a blue copy of $K_n$, there are no blue edges from this $K_n$ to a lower vertex in $G.$ Thus, there is no red copy of a $K_m^1$ and no blue copy of a $K_n ^1$ on $m+n+1$ vertices.
\end{proof}


\begin{figure}[h]
\centering
\begin{tikzpicture} [scale = 1.3]

\draw[red, thick] (7,.15) -- (9.75,2.05);
\draw[red, thick] (7,.25) -- (9.7,2.15);
\draw[red, thick] (6.94,.35)-- (9.6,2.21);

\draw[red, thick] (10.3,2.1) -- (12.98,.15);
\draw[red, thick] (10.35,2.2) -- (12.99,.25);
\draw[red, thick] (10.3,2.4) -- (13.05,.35);

\draw[blue, thick] (12.97,-.23) -- (10.23,-2.1);
\draw[blue, thick] (13.1,-.24) -- (10.33,-2.15);
\draw[blue, thick] (13.2,-.3) -- (10.4,-2.25);

\draw[red, thick] (9.75,-2.2) -- (6.95,-.1);
\draw[red, thick] (9.7,-2.3) -- (6.9,-.2);
\draw[red, thick] (9.65,-2.4) -- (6.85,-.3);

\draw[blue, thick] (9.9,-1.95) -- (9.9,1.95);
\draw[blue, thick] (10,-1.95) -- (10,1.95);
\draw[blue, thick] (10.1,-1.95) -- (10.1,1.95);

\draw[blue, thick] (7,-.1) -- (13,-.1);
\draw[blue, thick] (7,0) -- (13,0);
\draw[blue, thick] (7,.1) -- (13,.1);

\draw[black] (6.7,0) circle (9.5pt) node[anchor=center] {$1$};
\draw[black] (10,2.3) circle (9.5pt) node[anchor=center] {Z} node[anchor=west] {\hspace{.5cm} $R(m,n)-1$};
\draw[blue] (13.3,0) circle (9.5pt) node[anchor=center] {Y}  node[anchor=west] [black] {\hspace{.5cm} $n-1$};
\draw[red] (10,-2.3) circle (9.5pt) node[anchor=center] {X} node[anchor=west] [black] {\hspace{.5cm} $m-1$};

\end{tikzpicture}
\caption{Edge colorings between the sets $X$, $Y$, $Z$, and 1}
\end{figure}
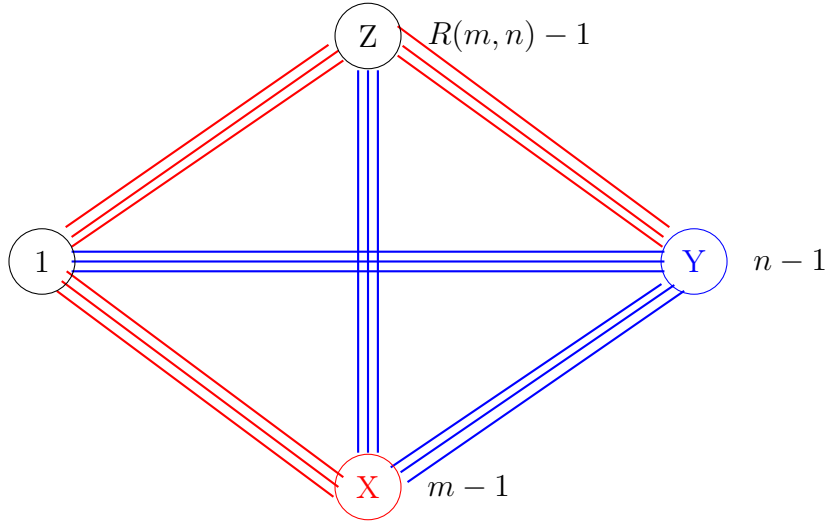


\begin{Th}
 We have $r_{<}(K_m^1,K_n^1) =  R(m,n) + m+n-1$.
\end{Th}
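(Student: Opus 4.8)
The plan is to simply combine the two propositions that precede this statement; no new machinery is needed. Proposition \ref{main}, applied with the two colors playing the roles of $K_m^1$ (red) and $K_n^1$ (blue), already gives the upper bound
\[
r_<(K_m^1,K_n^1) \le R(m,n) + m + n - 1 .
\]
So the only thing left is to convert Proposition \ref{prop:lb} into the matching lower bound.

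For the lower bound, I would argue as follows. Proposition \ref{prop:lb} exhibits an explicit two-coloring of the ordered complete graph on $R(m,n)+m+n-2$ vertices containing no red ordered copy of $K_m^1$ and no blue ordered copy of $K_n^1$. By the definition of $r_<$ (the minimum $N$ forcing such a monochromatic ordered copy in \emph{every} coloring), the existence of this coloring witnesses
\[
r_<(K_m^1,K_n^1) > R(m,n) + m + n - 2,
\]
and since $r_<$ is an integer this is equivalent to $r_<(K_m^1,K_n^1) \ge R(m,n) + m + n - 1$. Combining with the upper bound from Proposition \ref{main} yields the claimed equality.

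There is essentially no obstacle left at this stage: the content lives entirely in Propositions \ref{main} and \ref{prop:lb}, and the theorem is the bookkeeping that puts them together. The only point worth a sentence of care is making sure the color roles are matched consistently between the two propositions (i.e. that ``red $=K_m$'' and ``blue $=K_n$'' are used the same way in both the upper-bound argument and the construction), so that the inequalities are genuinely about the same quantity $r_<(K_m^1,K_n^1)$; this is immediate from how both statements are phrased. One could also remark, as a sanity check, that setting $m=n=3$ recovers Theorem \ref{?} on ordering $C$ of the $3$-pan, since $K_3^1$ is exactly that ordered graph and $R(3,3)=6$ gives $6+3+3-1=11$.
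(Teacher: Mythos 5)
Your proposal is correct and matches the paper's proof exactly: the theorem is obtained by combining the upper bound of Proposition \ref{main} with the lower-bound construction of Proposition \ref{prop:lb}, precisely as you describe. Your sanity check that $m=n=3$ recovers the bound of $11$ for ordering $C$ of the $3$-pan is a nice touch not present in the paper.
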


\begin{proof}
The proof follows immediately from combining Propositions \ref{main} and .
\end{proof}

\section{SAT Solver Results}\label{sec:SAT}

We use the SAT solver Minisat to determine the ordered Ramsey numbers for all orderings of $K_4-e$, including the off-diagonal cases.  We introduce for each edge $ij$ a boolean variable $x_{i,j}$. We interpret assigning $x_{i,j}$ to TRUE as coloring $ij$ red, and interpret assigning $x_{i,j}$ to FALSE as coloring $ij$ blue.  Then we naturally express the condition ``$K_N$ has no monochromatic ordered $K_4-e$'' as a boolean formula in conjunctive normal form (CNF), which is to say, a conjunction of disjunctions (an ``\emph{and} of \emph{or}s''). 

For example, the following CNF forbids,  in each color, a monochromatic $K_4-e$ with the missing edge being between the lowest two vertices:
\[
    \Phi_{0,1} = \bigwedge_{0\leq i<j<k<\ell<N} (x_{i,k} \vee  x_{i,\ell} \vee  x_{j,k} \vee  x_{j,\ell} \vee  x_{k,\ell} ) \wedge (\neg x_{i,k} \vee  \neg x_{i,\ell} \vee  \neg x_{j,k} \vee  \neg x_{j,\ell} \vee  \neg x_{k,\ell} )
\]

The formula $\Phi$ above is satisfiable for $N\leq 11$ and unsatisfiable for $N\geq 12$.  Satisfying assignments correspond to colorings without any monochromatic ordered $K_4 - e$.  Hence the Ramsey number is 12.

\begin{table}[hbt]
    \centering
    \begin{tabular}{c|c|c|c|c|c|c}
         & 0-1  & 0-2  & 0-3  & 1-2  & 1-3  & 2-3 \\
         \hline
    0-1   & 12  & 14  & 14  & 13  & 14  & 13 \\
    0-2     &   & 14  & 15  & 14  & 15  & 14 \\
    0-3     &   &   & 15  & 14  & 15  & 14 \\
    1-2    &   &   &   &  13 & 14  & 13 \\
    1-3     &   &   &   &   &  14 & 14 \\
    2-3     &   &   &   &   &   & 12 \\
    \end{tabular}
    \caption{Table of $r_<(K_4 - e, K_4-e)$, with missing edge indicated}
    \label{tab:my_label}
\end{table}

\section{Summary}
In this paper we were able to completely determine the ordered Ramsey numbers for every ordering of $K_2 \cup K_2$, for every ordering of $K_4 - e$,  and for the complete with 1-pendant graph on any number of vertices (relative to the classical Ramsey number). The latter result is particularly interesting considering it is often difficult to prove exact results using Ramsey numbers that are not exactly known themselves, i.e. the fact that the ordered Ramsey number depends on $R(n)$. 

An idea for future work  would be to try to determine \textit{why} certain orderings of a graph give different ordered Ramsey numbers than others; the work in \cite{balko} on paths suggests that the more ``monotonic'' the ordering, the larger the Ramsey number will be. 

\section{Acknowledgements}

The first author would like to thank Professor Conlon for reviewing preliminary results and introducing him to ordered Ramsey numbers. We would also like to thank him for his work on the construction in Theorem 5.3. 

The last three authors recognize the majority contribution of the first author by breaking the tradition of alphabetical ordering of the authors.

\end{document}